\newtheorem{theorem}{Theorem}[section]
\newtheorem{lemma}[theorem]{Lemma}
\newtheorem{cor}[theorem]{Corollary}
\theoremstyle{definition}
\newtheorem{definition}[theorem]{Definition}
\newtheorem{example}[theorem]{Example}
\theoremstyle{remark}
\newtheorem{remark}[theorem]{Remark}
\numberwithin{equation}{section}
\begin{document}
\title[Biwarped product submanifolds]{Biwarped product submanifolds\\
of a K\"{a}hler manifold}

\author[H.M. Ta\c stan]{Hakan  Mete Ta\c stan}

\address{Department of Mathematics\\
Faculty of Science\\
\.Istanbul University\\
Vezneciler, \.Istanbul, Turkey}
\email{hakmete@istanbul.edu.tr}

\subjclass[2010]{Primary 53B25, Secondary 53C55}

\keywords{ warped product, multiply warped product, holomorphic distribution, totally real submanifold,
pointwise slant distribution, K\"{a}hler manifold.}
\begin{abstract}
We study biwarped product submanifolds which are special cases of multiply warped product submanifolds in K\"{a}hler manifolds.
We observe the non-existence of such submanifolds under some circumstances.
We show that there exists a non-trivial biwarped product submanifold of a certain type by giving an illustrate example.
We also give a necessary and sufficient condition for such submanifolds to be locally trivial.
Moreover, we establish an inequality for the squared norm of the second fundamental form in terms
of the warping functions for such submanifolds. The equality case is also discussed.
\end{abstract}
\maketitle
\section{Introduction}
Bishop and O' Neill \cite{Bi} introduced the concept of warped product of Riemannian manifolds to
construct a large class of complete manifolds of negative curvature.
This concept is also a generalization of the usual product of Riemannian manifolds. N\"{o}lker \cite{No} considered the notion of
multiply warped products as a generalization of the  warped products. Since that time, multiply warped products has been studied by many authors.
For example, \"{U}nal \cite{U} studied partially the geometry of the multiply warped products when the metrics of such products are Lorentzian.
Curvature properties of such products were investigated by Dobarro and \"{U}nal \cite{Do}.\\

The concept of warped products or multiply warped products play very important roles in physics as well as in differential geometry,
especially in the theory of relativity. Actually, the standard spacetime models such as Robertson-Walker, Schwarschild, static and Kruscal are
warped products. Also, the simplest models of neighborhoods of stars and black holes are warped products \cite{O}. Moreover, many solutions to
Einstein's field equation can be expressed in terms of warped products \cite{Bee}.\\

In differential geometry, especially in almost complex geometry, one of the most intensively research areas
is the theory of submanifolds. Actually, the almost complex structure of an almost Hermitian manifold
determines several classes of submanifolds such as holomorphic(invariant), totally real(anti-invariant) \cite{Y},
CR- \cite{Be}, generic \cite{Che0}, slant \cite{Che1}, semi-slant \cite{Pa}, hemi-slant(pseudo-slant) \cite{Ca, S3}, pointwise slant \cite{Che4,Et},
bi-slant \cite{Ca}, skew CR- and generic submanifolds \cite{Ro}. Among them, the last one contains all other classes.\\

The theory of warped product submanifolds has been becoming a popular research area since Chen \cite{Che2} studied
the warped product CR-submanifolds in K\"{a}hler manifolds. Actually, several classes of warped product submanifolds appeared
in the last fifteen years (see \cite{Sr,S2,S3,S4,S5}). Also, warped product submanifolds have been studying in different kinds of structures
such as nearly K\"{a}hler \cite{S7}, para-K\"{a}hler \cite{Che6}, locally product Riemannian \cite{S6,Ta},
cosymplectic \cite{Ud}, Sasakian \cite{Mi}, generalized Sasakian \cite{Su}, trans-Sasakian \cite{Kh1}, $(\kappa,\mu)-$ \cite{Tr},
Kenmotsu \cite{Kh,Mu} and quaternion \cite{Ma}. Most of the studies related to the theory of
warped product submanifolds can be found in Chen's coming book \cite{Che3}. Recently, Chen and Dillen \cite{Che5}
studied multiply warped product submanifolds in K\"{a}hler manifolds and they obtained very useful optimal inequalities.
We note that such submanifolds were also studied in Kenmotsu manifolds \cite{Ol}.\\

In this paper, we consider and study biwarped product submanifolds in K\"{a}hler manifolds.
Here, a biwarped product means that a multiply warped product which has only two fibers.
We observe the non-existence of biwarped product submanifolds under some circumstances.
After giving an illustrate example, we study such submanifolds in case of the base factor is
holomorphic and one of the two fibers is totally real
and the other one is pointwise slant submanifold.
We also give characterization for this kind of submanifolds.
Moreover, we investigate the behavior of the second fundamental form of such submanifolds and as a result,
we give a necessary and sufficient condition for such manifolds to be locally trivial.
Furthermore, an inequality for the squared norm of the second fundamental form in terms
of the warping functions for such submanifolds is obtained. The equality case is also
considered.
\section{Preliminaries}
In this section, we recall the fundamental definitions and notions needed further study. Actually, in subsection 2.1,
we will recall the definition of the multiply warped product manifolds. In subsection 2.2, we will give the basic background for submanifolds of Riemannian manifolds.
The definition of a K\"{a}hler manifold and the  some classes of submanifolds of K\"{a}hler manifolds   are placed in subsection 2.3.
\subsection{Multiply product manifolds}
Let $(M_{i},g_{i})$ be Riemannian manifolds for any $i\in\{0,1,...,k\}$ and let $f_{j}:M_{0}\rightarrow (0,\infty)$  be smooth functions for any
$j\in\{1,2,...,k\}.$ Then the \emph{multiply product manifold} \cite{No} $\bar{M}=M_{0}\times_{f_{1}}M_{1}\times...\times_{f_{k}}M_{k}$
is the product manifold $\tilde{M}=M_{0}\times M_{1}\times...\times M_{k}$ endowed with the metric
$$g=\pi^{*}_{0}(g_{0})\oplus (f_{1}\circ\pi_{0})^{2}\pi^{*}_{1}(g_{1})\oplus...\oplus (f_{k}\circ\pi_{0})^{2}\pi^{*}_{k}(g_{k}).$$
More precisely, for any vector fields $X$ and $Y$ of $\bar{M}$, we have
$$g(X,Y)=g_{0}(\pi_{0*}X,\pi_{0*}Y)+\displaystyle\sum^{k}_{i=1}(f_{i}\circ\pi_{0})^{2}g_{i}(\pi_{i*}X,\pi_{i*}Y),$$
where $\pi_{i}:\tilde{M}\rightarrow M_{i}$ is the canonical projection of $\tilde{M}$ onto $M_{i}$, \,
$\pi^{*}_{i}(g_{i})$ is the pullback of $g_{i}$ by $\pi_{i}$  and the subscript $*$
denotes the derivative map of $\pi_{i}$ for each $i.$ Each function $f_{j}$ is called a \emph{warping function} and each manifold $(M_{j},g_{j})$, $j\in\{1,2,...,k\}$ is called a \emph{fiber} of the multiply warped product $\bar{M}.$ The manifold $(M_{0},g_{0})$ is called a \emph{base manifold} of $\bar{M}.$ As well known, the base manifold of $\bar{M}$ is totally geodesic and the fibers of $\bar{M}$ are totally umbilic in $\bar{M}$.\\

Let $\bar{M}=M_{0}\times_{f_{1}}M_{1}\times...\times_{f_{k}}M_{k}$ be multiply product manifold,
if $k=1$, then we get a (singly) warped product \cite{Bi}. We call the multiply product manifolds as \emph{biwarped product manifolds} for $k=2.$
In other words, a biwarped product manifold has the form $M_{0}\times_{f_{1}}M_{1}\times_{f_{2}}M_{2}.$ We say that
a biwarped product manifold is \emph{trivial}, if the warping functions  $f_{1}$ and $f_{2}$ are constants. Note that
biwarped product manifolds were also studied under the name of \emph{twice warped products} \cite{Ba}.\\

Let $\bar{M}=M_{0}\times_{f_{1}}M_{1}\times_{f_{2}}M_{2}$ be a biwarped product manifold with the Levi-Civita connection $\bar{\nabla}$
and $^{i}\nabla$ denote the Levi-Civita connection of $M_{i}$ for $i\in\{0,1,2\}.$ By usual convenience, we denote the set of lifts of vector fields on $M_{i}$ by $\mathcal{L}(M_{i})$ and use the same notation for a vector field and for its lift. On the other hand, since the map $\pi_{0}$ is an isometry and $\pi_{1}$ and $\pi_{2}$ are (positive) homotheties, they preserve the Levi-Civita connections. Thus, there is no confusion using the same notation for a connection on $M_{i}$ and for its pullback via $\pi_{i}.$ Then, the covariant derivative formulas for a biwarped product manifold are given by the following.
\begin{lemma} \label{lem001}(\cite{Ba}) Let $\bar{M}=M_{0}\times_{f_{1}}M_{1}\times_{f_{2}}M_{2}$ be a biwarped product manifold. Then, we have
\begin{align}
&\bar{\nabla}_{U}V=\,^{0}\nabla_{U}V, \label{a1}\\
&\bar{\nabla}_{V}X=\bar{\nabla}_{X}V=V(\ln f_{i})X,\label{a2}\\
&\bar{\nabla}_{X}Z= \begin{cases}
            0         &  \mbox{if } i\neq j, \\
            ^{i}\nabla_{X}Z-g(X,Z)\emph{grad}\,(\ln f_{i}) &  \mbox{if } i=j,
       \end{cases}\label{a3}\qquad\qquad\qquad
\end{align}
where $U,V\in\mathcal{L}(M_{0})$, $X\in\mathcal{L}(M_{i})$  and  $Z\in\mathcal{L}(M_{j})$.
\end{lemma}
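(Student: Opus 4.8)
The plan is to derive the three formulas directly from the Koszul formula for the Levi-Civita connection,
\[
2g(\bar{\nabla}_A B, C) = A\,g(B,C) + B\,g(A,C) - C\,g(A,B) + g([A,B],C) - g([A,C],B) - g([B,C],A),
\]
applied to lifts of vector fields from the three factors, and then to identify each derivative by testing against an arbitrary lift $C$. Two structural facts carry the bulk of the argument. First, lifts from distinct factors commute, $[A,B]=0$, while the bracket of two lifts from the same factor $M_i$ is again a lift from $M_i$; in particular $[V,X]=0$ whenever $V\in\mathcal{L}(M_0)$ and $X\in\mathcal{L}(M_i)$. Second, a lift $X\in\mathcal{L}(M_i)$ with $i\geq 1$ annihilates every function pulled back from $M_0$ (so $X(f_j)=0$) and every function pulled back from $M_j$ with $j\neq i$, whereas a lift $V\in\mathcal{L}(M_0)$ annihilates functions pulled back from the fibers but satisfies $V(f_i^2)=2f_i\,V(f_i)$. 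Throughout I will use that on vectors tangent to $M_i$ the biwarped metric restricts to $f_i^2 g_i$.

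First I would establish \eqref{a1}. For $U,V\in\mathcal{L}(M_0)$ and a test field $C$, the Koszul formula splits into two cases. When $C\in\mathcal{L}(M_0)$ it reduces verbatim to the Koszul formula on $(M_0,g_0)$, giving $g(\bar{\nabla}_U V,C)=g_0({}^{0}\nabla_U V,C)$. When $C=X\in\mathcal{L}(M_i)$ every term vanishes: the metric terms $g(V,X)$, $g(U,X)$ and all bracket terms drop out because the factors differ, while $X\,g(U,V)=0$ since $g(U,V)=g_0(U,V)$ is a function on $M_0$. Hence $\bar{\nabla}_U V$ has no fiber component and coincides with ${}^{0}\nabla_U V$. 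Next I would treat \eqref{a2}. The symmetry $\bar{\nabla}_V X=\bar{\nabla}_X V$ is immediate from $[V,X]=0$ and the torsion-freeness of $\bar{\nabla}$. Testing $\bar{\nabla}_V X$ against $W\in\mathcal{L}(M_0)$ kills every term (again $X\,g(V,W)=0$ as $g(V,W)$ lives on $M_0$), so there is no base component; testing against $Y\in\mathcal{L}(M_j)$ leaves only $V\,g(X,Y)$, which vanishes unless $j=i$, and for $j=i$ equals $V(f_i^2)\,g_i(X,Y)=2f_i V(f_i)\,g_i(X,Y)$. Comparing with $2g(V(\ln f_i)X,Y)=2V(\ln f_i)f_i^2 g_i(X,Y)=2f_i V(f_i)\,g_i(X,Y)$ identifies the derivative as $V(\ln f_i)X$.

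Finally I would prove \eqref{a3}, where the two cases separate cleanly. For $X\in\mathcal{L}(M_i)$, $Z\in\mathcal{L}(M_j)$ with $i\neq j$ one has $[X,Z]=0$, and testing $\bar{\nabla}_X Z$ against any lift shows that all Koszul terms vanish: each surviving candidate is a derivative of a product of a warping factor with a fiber metric component in which the differentiating field annihilates both factors, so $\bar{\nabla}_X Z=0$. The case $i=j$ demands the most care. Testing against $W\in\mathcal{L}(M_0)$, the sole surviving term is $-W\,g(X,Z)=-2f_i W(f_i)\,g_i(X,Z)$; since $g(X,Z)=f_i^2 g_i(X,Z)$ and $W(\ln f_i)=g(\operatorname{grad}(\ln f_i),W)$, this matches exactly the base contribution of $-g(X,Z)\operatorname{grad}(\ln f_i)$. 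Testing against $Y\in\mathcal{L}(M_i)$, every bracket stays inside $M_i$ and every factor $f_i^2$ pulls out uniformly, leaving $f_i^2$ times the Koszul expression on $(M_i,g_i)$, which reproduces $g({}^{i}\nabla_X Z,Y)$; testing against $Y\in\mathcal{L}(M_l)$ with $l\neq i$ gives zero. Assembling the base and fiber components yields ${}^{i}\nabla_X Z-g(X,Z)\operatorname{grad}(\ln f_i)$.

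The essential subtlety, and the place where I would be most careful, is the bookkeeping of the warping factors in the $i=j$ case of \eqref{a3}: the factor $f_i^2$ must pull out cleanly in the $M_i$-direction so that the intrinsic connection ${}^{i}\nabla_X Z$ emerges, while it is precisely the $W$-derivative $W(f_i^2)=2f_i W(f_i)$ that produces the logarithmic-gradient term in the $M_0$-direction. All remaining steps are routine applications of the two structural facts above, namely the vanishing of mixed brackets and the pattern of which lifts annihilate which pulled-back functions.
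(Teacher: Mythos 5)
Your proof is correct. Note that the paper does not prove this lemma at all: it is quoted from the cited reference (Baker's thesis on twice warped products, itself following O'Neill's and N\"{o}lker's treatment of warped and multiply warped products), so there is no in-paper argument to compare against. Your Koszul-formula derivation is exactly the standard proof used in those sources: the two structural inputs you isolate (vanishing of mixed brackets of lifts, and the pattern of which lifts annihilate which pulled-back functions) are the whole content, and your case analysis for \eqref{a1}--\eqref{a3}, including the careful extraction of the factor $f_i^{2}$ in the $i=j$ case of \eqref{a3} and the identification of $-W\,g(X,Z)$ with the $\operatorname{grad}(\ln f_i)$ term, is complete and accurate.
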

We note that grad$(\ln f_{i})\in\mathcal{L}(M_{0})$ for $i=1,2$ \cite{No}.
\subsection{Submanifolds of Riemannian manifolds}
Let $M$ be an isometrically immersed submanifold in a Riemannian manifold $(\bar{M},g)$.
Let $\bar{\nabla}$ is the Levi-Civita connection of $\bar{M}$ with respect to the metric $g$
and let $\nabla$ and $\nabla^{\bot}$ be the induced, and induced normal connection on $M$, respectively. Then, for all  $U,V\in TM$ and
$\xi\in T^{\bot}M$, the Gauss and Weingarten formulas are given respectively by
\begin{equation}
\label{e2}
\begin{array}{c}
\bar{\nabla}_{U}V=\nabla_{U}V+h(U,V)
\end{array}
\end{equation}
and
\begin{equation}
\label{e3}
\begin{array}{c}
\bar\nabla_{U}\xi=-A_{\xi}U+\nabla_{U}^{\bot}\xi
\end{array},
\end{equation}
where $TM$ is the tangent bundle and $T^{\bot}M$ is the normal bundle of $M$ in $\bar{M}$.
Additionally, $h$ is the \emph{second fundamental form} of $M$ and $A_{\xi}$
is the Weingarten endomorphism associated with $\xi.$ The second
fundamental form $h$ and the \emph{shape operator} $A$ related by
\begin{equation}
\label{e4}
\begin{array}{c}
g(h(U,V),\xi)=g(A_{\xi}U,V)
\end{array}.
\end{equation}
The \emph{mean curvature vector field} $H$ of $M$ is given by
$H=\frac{1}{m}(trace\,h),$ where $dim(M)=m.$ We say that the submanifold $M$ is
\emph{totally geodesic} in $\bar{M}$ if $h=0$, and
\emph{minimal} if $H=0.$ The submanifold $M$ is called \emph{totally umbilical}  if $h(U,V)=g(U,V)H$ for all $U, V\in TM.$
If the manifold $M$ is totally umbilical and its mean curvature vector field $H$ is \emph{parallel}, i.e. $g(\bar\nabla_{U}H,\xi)=0$ for all
$U\in TM$ and $\xi\in T^{\bot}M$, then the submanifold $M$ is said to be \emph{spherical} or \emph{extrinsic sphere}.\\

Let $\mathcal{D}^{1}$ and $\mathcal{D}^{2}$ be any two distributions on $M$. Then we say that
$M$ is $\mathcal{D}^{1}$-\emph{geodesic}, if $h(U,V)=0$ for all $U, V\in \mathcal{D}^{1}$ and we say that
$(\mathcal{D}^{1},\mathcal{D}^{2})$-\emph{mixed geodesic} if $h(V,X)=0$ for $V\in \mathcal{D}^{1}$ and $X\in \mathcal{D}^{2}.$
If for all $V\in \mathcal{D}^{1}$ and $X\in \mathcal{D}^{2}$,  $\nabla_{X}V\in\mathcal{D}^{1}$, then $\mathcal{D}^{1}$ is called $\mathcal{D}^{2}$-\emph{parallel}.
We say that $\mathcal{D}^{1}$ is \emph{autoparallel} if $\mathcal{D}^{1}$ is $\mathcal{D}^{1}$-parallel. If a distribution on $M$ is autoparallel,
then by  the Gauss formula it is totally geodesic.
\subsection{Some classes of submanifolds of K\"{a}hler manifolds}
Let $\bar{M}$ be an almost complex manifold with almost complex structure $J.$ If there is a Riemannian metric $g$ on $\bar{M}$ satisfying
\begin{equation}
\label{e5}
\begin{array}{c}
g(JX,JY)=g(X,Y)
\end{array}
\end{equation}
for any $X,Y\in T\bar{M},$ then we say that $(\bar{M},J,g)$ is an almost Hermitian manifold.
Let $\bar{\nabla}$ be the Levi-Civita connection of the almost Hermitian manifold $(\bar{M},J,g)$ with respect to $g.$
Then $(\bar{M},J,g)$ is called a \emph{K\"{a}hler manifold} \cite{Y} if $J$ is parallel with respect to $\bar{\nabla},$
i.e.,
\begin{equation}
\label{e6}
\begin{array}{c}
(\bar\nabla_{X}J)Y=0
\end{array}
\end{equation}
for all $X,Y\in T\bar{M}.$\\

Let $M$ be a Riemannian manifold isometrically immersed submanifold in a K\"{a}hler  manifold $(\bar{M},J,g)$. Then the submanifold
$M$ is called a \emph{pointwise slant submanifold} \cite{Che4, Et} if for every point $p$ of $M,$ the Wirtinger angle $\theta(V)$ between $JV$
and the tangent space $T_{p}M$ at $p$ is independent of the choice of the nonzero vector $V\in T_{p}M.$ In this case, the angle $\theta$
can be viewed as a function on $M$ and it is called the \emph{slant function} of $M$. We say that the pointwise slant submanifold $M$ is
\emph{proper} neither $\cos\theta(p)=0$ nor $\sin\theta(p)=0$ at each point $p\in M$. (This condition is different from Chen's definition, see \cite{Che4}).\\

Now, let $M$ be a submanifold of a K\"{a}hler  manifold $(\bar{M},J,g)$. For any $V\in TM$, we put
\begin{equation}
\label{e7}
\begin{array}{c}
JV=TV+FV
\end{array}.
\end{equation}
Here $TV$ is the tangential part of $JV,$ and $FV$ is the normal
part of $JV.$ Then $M$ is  a pointwise slant submanifold of $\bar{M}$ if and only if, for any $V\in TM$, we have
\begin{equation}
\label{e8}
\begin{array}{c}
T^{2}V=-\cos^{2}\!\theta V
\end{array}
\end{equation}
for some function $\theta$ defined on $M$ \cite{Che4}. For a pointwise slant submanifold of $\bar{M}$,
using (\ref{e7}), (\ref{e8}) and the K\"{a}hler structure, it is not difficult to prove the following two facts.
\begin{equation}
\label{e9}
\begin{array}{c}
g(TU,TV)=\cos^{2}\!\theta g(U,V)
\end{array},
\end{equation}
\begin{equation}
\label{e10}
\begin{array}{c}
g(FU,FV)=\sin^{2}\!\theta g(U,V)
\end{array}
\end{equation}
for $U,V\in TM$.\\

Let $M$ be a pointwise slant submanifold with slant function $\theta$ of a K\"{a}hler  manifold $(\bar{M},J,g).$
If the function $\theta$ is a constant, i.e., it is also independent of the choice of the point $p\in M$, then we say that
$M$ is a \emph{slant submanifold} \cite{Che1}.\\

If $\theta\equiv0,$ then $M$ is called a \emph{holomorphic} or \emph{complex submanifold} \cite{Y}. In that case, the tangent space
$T_{p}M$ is invariant with respect to the almost complex structure $J$ at each point $p\in M$, i.e., $J(T_{p}M)\subseteq T_{p}M.$\\

If $\theta\equiv\frac{\pi}{2},$ then $M$ is called a  \emph{totally real submanifold} of   \cite{Y}. In which case, the tangent space
$T_{p}M$ is anti-invariant with respect to the almost complex structure $J$ for every point $p$ of $M$, i.e., $J(T_{p}M)\subseteq T^{\bot}_{p}M.$
\section{Generalized $J$-induced submanifolds of order 1}
In this section, after renaming the generic submanifolds (in the sense of Ronsse \cite{Ro}), we will give some results concerning
totally geodesicness and integrability of the distributions which are involved in the definition of such submanifolds.\\

The most general class of submanifolds determined by the almost complex structure is the class of
generic submanifols which was defined by Ronsse \cite{Ro}. There are two other classes of submanifolds with the same name.
One of these is the class of defined by Chen \cite{Che0} and the other one is the class defined by Yano and Kon \cite{Y0}.
Because of these facts, to avoid name confusion, we call the generic submanifolds (in the sense of Ronsse \cite{Ro}) as
\emph{generalized structure induced} or \emph{generalized J-induced submanifolds}.
\begin{definition} (\cite{Ro}) Let $M$ be a submanifold of a K\"{a}hler manifold $(\bar{M},J,g).$
Then $M$ is said to be a\emph{ generalized structure induced} or \emph{generalized J-induced submanifold} if the tangent bundle $TM$ of $M$
has the form
\begin{equation}\nonumber
\begin{array}{c}
TM=\mathcal{D}^T\oplus\mathcal{D}^\bot\oplus\mathcal{D}^{\theta_{1}}\oplus...\oplus\mathcal{D}^{\theta_{k}}
\end{array},
\end{equation}
where $\mathcal{D}^T$ is a holomorphic, $\mathcal{D}^\bot$ is a totally real and each of $\mathcal{D}^{\theta_{i}}$ is
a pointwise slant distribution on $M$ and $\theta_{i}$'s are distinct for $i=1,...,k.$ In addition, if each of $\mathcal{D}^{\theta_{i}}$
is a slant distribution, then we say that $M$ is a \emph{skew CR-submanifold} of $\bar{M}.$
\end{definition}
In a special case, we have the following definition.
\begin{definition} A submanifold $M$ of a K\"{a}hler  manifold $(\bar{M},J,g)$ is called
a\emph{ generalized structure induced submanifold of order 1} or \emph{generalized $J$-induced submanifold of order 1} if it
is a generalized $J$-induced submanifold with $k=1.$
\end{definition}
In which case, we have
\begin{equation}\label{g1}
\begin{array}{c}
TM=\mathcal{D}^T\oplus\mathcal{D}^\bot\oplus\mathcal{D}^\theta
\end{array}.
\end{equation}
Thus, the normal bundle $T^{\bot}M$ of
$M$ is decomposed as
\begin{equation}\label{g2}
\begin{array}{c}
T^{\bot}M=J(\mathcal{D}^{\bot})\oplus F(\mathcal{D}^{\theta})\oplus\overline{\mathcal{D}}^T
\end{array},
\end{equation}
where $\overline{\mathcal{D}}^T$ is the orthogonal complementary distribution of
$J(\mathcal{D}^{\bot})\oplus F(\mathcal{D}^{\theta})$ in $T^{\bot}M$
and it is an invariant subbundle of $T^{\bot}M$ with respect to $J.$\\

We say that a generalized $J$-induced submanifold of order 1 is \emph{proper}, if $\mathcal{D}^T\neq\{0\}$,
$\mathcal{D}^{\bot}\neq\{0\}$ and the slant function $\theta$ belongs to open interval $(0,\frac{\pi}{2}).$\\

For the further study of generalized $J$-induced submanifolds of order 1 of a K\"{a}hler  manifold, we need the following lemma.
\begin{lemma} \label{lem01} Let $M$ be a generalized $J$-induced submanifold of order 1 of a K\"{a}hler  manifold $(\bar{M},J,g)$. Then, we have
\begin{equation}
\label{g3}
\begin{array}{c}
g(\nabla_{U}V,Z)=\csc^{2}\!\theta g(A_{FZ}JV-A_{FTZ}V,U)
\end{array},
\end{equation}
\begin{equation}
\label{g4}
\begin{array}{c}
g(\nabla_{Z}W,V)=\csc^{2}\!\theta g(A_{FTW}V-A_{FW}JV,Z)
\end{array},
\end{equation}
where $U,V\in\mathcal{D}^T$ and  $Z,W\in\mathcal{D}^\theta$.
\end{lemma}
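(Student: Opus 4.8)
The plan is to prove both identities by a single computational scheme that exploits the holomorphicity of $\mathcal{D}^{T}$ together with the K\"ahler condition \eqref{e6} and the pointwise slant relation \eqref{e8}. The guiding idea is that, after transferring $J$ across $\bar\nabla$ and decomposing $JZ$ (respectively $JW$) by means of \eqref{e7}, the slant identity $T^{2}=-\cos^{2}\!\theta$ forces the very covariant derivative we are computing to reappear with coefficient $\cos^{2}\!\theta$; isolating it in the resulting linear relation then manufactures the factor $\csc^{2}\!\theta=1/\sin^{2}\!\theta$.

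First I would establish \eqref{g3}. Since $Z\in TM$, the normal part in the Gauss formula \eqref{e2} drops out, giving $g(\nabla_{U}V,Z)=g(\bar\nabla_{U}V,Z)$. As $V\in\mathcal{D}^{T}$ is holomorphic, $JV\in\mathcal{D}^{T}\subseteq TM$ and $V=-J(JV)$; applying \eqref{e6} and the skew-symmetry $g(JX,Y)=-g(X,JY)$ (which follows from \eqref{e5}) yields $g(\bar\nabla_{U}V,Z)=g(\bar\nabla_{U}JV,JZ)$. I then split $JZ=TZ+FZ$ via \eqref{e7}. The normal piece is handled by the Weingarten formula \eqref{e3} together with the symmetry \eqref{e4}, producing $g(\bar\nabla_{U}JV,FZ)=g(A_{FZ}JV,U)$. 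For the tangential piece I reapply \eqref{e6} to get $g(\bar\nabla_{U}JV,TZ)=-g(\bar\nabla_{U}V,JTZ)$ and substitute $JTZ=T^{2}Z+FTZ=-\cos^{2}\!\theta\,Z+FTZ$ from \eqref{e8} and \eqref{e7}; the $Z$-part reproduces $\cos^{2}\!\theta\,g(\nabla_{U}V,Z)$, while the $FTZ$-part is again converted by \eqref{e3}, \eqref{e4} into $g(A_{FTZ}V,U)$. Collecting terms gives $\sin^{2}\!\theta\,g(\nabla_{U}V,Z)=g(A_{FZ}JV-A_{FTZ}V,U)$, which is \eqref{g3}.

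For \eqref{g4} the scheme is identical with the roles interchanged: starting from $g(\nabla_{Z}W,V)=g(\bar\nabla_{Z}W,V)$, I would move $J$ onto $V\in\mathcal{D}^{T}$ to obtain $g(\bar\nabla_{Z}W,V)=g(\bar\nabla_{Z}JW,JV)$ and then decompose $JW=TW+FW$. The $FW$-term gives $-g(A_{FW}JV,Z)$ through \eqref{e3} and \eqref{e4}, while the $TW$-term, after a second application of \eqref{e6} and the substitution $JTW=-\cos^{2}\!\theta\,W+FTW$, contributes $\cos^{2}\!\theta\,g(\nabla_{Z}W,V)+g(A_{FTW}V,Z)$. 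Solving the resulting relation for $g(\nabla_{Z}W,V)$ produces \eqref{g4}.

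The one genuine subtlety, and the step I would watch most carefully, arises in \eqref{g4}: here the factor $\cos^{2}\!\theta$ sits on a vector that is \emph{differentiated} by $\bar\nabla_{Z}$, so the product rule spawns an extra term $(Z\cos^{2}\!\theta)\,W$, unlike in \eqref{g3} where $\cos^{2}\!\theta$ only multiplies a vector in the paired slot. This stray term would destroy the clean linear relation were it not annihilated upon pairing with $V$; but $\mathcal{D}^{\theta}\perp\mathcal{D}^{T}$ forces $g(W,V)=0$, so no derivative of the slant function survives. Apart from this, the principal bookkeeping hazards are the signs in $g(JX,Y)=-g(X,JY)$ and the repeated use of the symmetry $g(A_{\xi}X,Y)=g(A_{\xi}Y,X)$, which I would track explicitly at each step.
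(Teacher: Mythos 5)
Your argument is correct and is essentially the paper's own proof: the author establishes \eqref{g3} by exactly this computation (transfer $J$ across $\bar\nabla$ via the K\"ahler condition, split $JZ=TZ+FZ$, use $T^{2}Z=-\cos^{2}\!\theta\,Z$ together with the Gauss--Weingarten formulas, and solve the resulting linear relation for $g(\nabla_{U}V,Z)$), and then dismisses \eqref{g4} as obtainable ``in a similar way.'' Your extra remark that in \eqref{g4} the product rule produces a term in $Z(\cos^{2}\!\theta)$ which is annihilated by $g(W,V)=0$ is accurate and fills in precisely the detail the paper's one-line treatment of \eqref{g4} omits.
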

\begin{proof} Using (\ref{e2}), (\ref{e5}) and (\ref{e6}), we have
\begin{align*}
g(\nabla_{U}V,Z)&=g(\bar{\nabla}_{U}J,JZ)=g(\bar{\nabla}_{U}JV,TZ)+g(\bar{\nabla}_{U}JV,FZ)\\
&=-g(\bar{\nabla}_{U}V,T^{2}Z)-g(\bar{\nabla}_{U}V,FTZ)+g(\bar{\nabla}_{U}JV,FZ).
\end{align*}
for $U,V\in\mathcal{D}^T$ and  $Z\in\mathcal{D}^\theta$. Now, using (\ref{e8}) and (\ref{e3}), we obtain
\begin{equation}\nonumber
\begin{array}{c}
\sin^{2}\!\theta g(\nabla_{U}V,Z)=g(A_{FZ}JV-A_{FTZ}V,U)
\end{array}.
\end{equation}
This gives (\ref{g3}). The other assertion can be obtained in a similar way.
\end{proof}
\begin{lemma} \label{lem02} Let $M$ be a generalized $J$-induced submanifold of order 1 of a K\"{a}hler  manifold $(\bar{M},J,g)$. Then, we have
\begin{align}
&g(\nabla_{U}V,X)=g(A_{JX}U,JV),\label{g5}\\
&g(\nabla_{U}X,Z)=-\sec^{2}\!\theta g(A_{JX}TZ-A_{FTZ}X,U),\label{g6}\\
&g(\nabla_{X}Y,U)=-g(A_{JY}X,JU),\label{g7}\\
&g(\nabla_{Z}W,Y)=\sec^{2}\!\theta g(A_{JY}TW-A_{FTW}Y,Z),\label{g8}\\
&g(\nabla_{Z}Y,U)=-g(A_{JY}JU,Z),\label{g9}\\
&g(\nabla_{X}Y,Z)=-\sec^{2}\!\theta \{g(A_{JY}X,TZ)-g(A_{FTZ}X,Y)\},\label{g10}\\
&g(\nabla_{X}U,Z)=\csc^{2}\!\theta \{g(A_{FZ}JU-A_{FTZ}U,X)\},\label{g11}
\end{align}
where $U,V\in\mathcal{D}^{T}$, $X,Y\in\mathcal{D}^\bot$ and  $Z,W\in\mathcal{D}^\theta$.
\end{lemma}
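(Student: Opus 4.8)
The plan is to prove each of the seven identities by the same template already used for Lemma~\ref{lem01}: rewrite the left-hand intrinsic quantity as an ambient one via the Gauss formula (\ref{e2}), transfer the almost complex structure across the metric using (\ref{e5}) together with the K\"{a}hler condition (\ref{e6}) (so that $\bar\nabla_{W}JV=J\bar\nabla_{W}V$), split every vector into its tangential and normal parts through (\ref{e7}), and finally read off shape operators with (\ref{e4}) by pairing a normal vector against a tangential one. Throughout I shall use that $\mathcal{D}^{T}$ is holomorphic ($JV=TV$, $FV=0$ for $V\in\mathcal{D}^{T}$), that $\mathcal{D}^{\bot}$ is totally real ($JX=FX$, $TX=0$ for $X\in\mathcal{D}^{\bot}$), and that $\mathcal{D}^{\theta}$ is pointwise slant, so that (\ref{e8}) gives $T^{2}Z=-\cos^{2}\theta\,Z$ and hence $J(TZ)=T^{2}Z+FTZ$.

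For the three identities (\ref{g5}), (\ref{g7}) and (\ref{g9}) that carry no trigonometric factor, a single step suffices. In each case I apply $J$ inside the metric and use the K\"{a}hler property to move the ambient derivative onto $JV$, $JY$ or $JW$; since $JX$ and $JY$ are normal while $JU$ and $JV$ are tangential, exactly one application of either the Gauss formula (\ref{e2}) (for (\ref{g5}), where the derivative lands on the tangential $JV$) or the Weingarten formula (\ref{e3}) (for (\ref{g7}) and (\ref{g9}), where $JY$ is normal) produces the stated shape operator; for (\ref{g9}) one additionally invokes the symmetry $g(A_{\xi}P,Q)=g(A_{\xi}Q,P)$ coming from (\ref{e4}).

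The identities (\ref{g6}), (\ref{g8}) and (\ref{g10}) carry a factor $\sec^{2}\theta$. Here the device is to replace the slant vector sitting in the metric slot (not the one being differentiated) by $-\sec^{2}\theta\,T^{2}Z$, using $T^{2}Z=-\cos^{2}\theta\,Z$, and then to write $T^{2}Z=J(TZ)-FTZ$. The term containing $J(TZ)$ is again transferred by the K\"{a}hler condition onto a totally real vector, whose image under $J$ is normal, so Weingarten applies; the term containing the normal vector $FTZ$ is handled by the Gauss formula. For (\ref{g8}) one first flips the derivative off the slant vector via $g(\nabla_{Z}W,Y)=-g(W,\nabla_{Z}Y)$, which is legitimate since $\mathcal{D}^{\theta}\perp\mathcal{D}^{\bot}$, before performing the substitution on $W$. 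The point of the substitution is that every surviving term becomes a genuine shape operator, so that no normal-connection contribution $\nabla^{\bot}$ ever persists.

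Finally, (\ref{g11}) carries $\csc^{2}\theta$ and is proved exactly as (\ref{g3}) in Lemma~\ref{lem01}, with the base direction $X\in\mathcal{D}^{\bot}$ in place of $U\in\mathcal{D}^{T}$: after applying $J$ and splitting $JZ=TZ+FZ$, the piece $-g(\bar\nabla_{X}U,T^{2}Z)=\cos^{2}\theta\,g(\nabla_{X}U,Z)$ reproduces the left-hand side, so transposing it leaves $\sin^{2}\theta\,g(\nabla_{X}U,Z)$ and dividing yields the $\csc^{2}\theta$ factor. I expect the only real subtlety, and hence the main obstacle, to be the disciplined bookkeeping of tangential versus normal components so that the unwanted $\nabla^{\bot}$ contributions are always paired against a tangential field and vanish; indeed the whole $\sec^{2}\theta$ versus $\csc^{2}\theta$ dichotomy simply records whether the $T^{2}$-substitution regenerates the left-hand quantity, giving $\csc^{2}\theta$, or not, giving $\sec^{2}\theta$.
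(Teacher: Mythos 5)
Your proposal is correct: I checked each of the seven identities against your template and the computations go through exactly as you describe, with the $\sec^{2}\!\theta$ cases coming from the substitution $Z=-\sec^{2}\!\theta\,T^{2}Z$ in the metric slot and (\ref{g11}) from transposing the regenerated $\cos^{2}\!\theta$ term. This is essentially the paper's own approach — the paper simply omits these computations by citing Lemmas 1--3 of \cite{S4}, and your argument is the same scheme already carried out explicitly in the proof of Lemma \ref{lem01}.
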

\begin{proof} The proofs of all equations are same as the proofs of equations of Lemma 1, Lemma 2 and Lemma 3 of \cite{S4}.
So, we omit them.
\end{proof}
As applications of Lemma (\ref{lem01}) and Lemma (\ref{lem02}), we have the following results.
\begin{theorem} \label{thm01} Let $M$ be a generalized $J$-induced submanifold of order 1 of a K\"{a}hler  manifold $(\bar{M},J,g)$. Then
the holomorphic distribution $\mathcal{D}^{T}$ is totally geodesic if and only if the following equations hold
\begin{equation}\label{g12}
\begin{array}{c}
g(A_{JX}U,JV)=0
\end{array},
\end{equation}
\begin{equation}\label{g13}
\begin{array}{c}
g(A_{FZ}U,JV)=g(A_{FTZ}U,V)
\end{array}
\end{equation}
for $U,V\in\mathcal{D}^{T}$, $X\in\mathcal{D}^\bot$ and  $Z\in\mathcal{D}^\theta$.
\end{theorem}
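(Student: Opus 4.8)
The plan is to characterize total geodesicity of $\mathcal{D}^{T}$ by recalling that, since $\mathcal{D}^{T}$ is a distribution on $M$, it is totally geodesic precisely when $\nabla_{U}V$ stays in $\mathcal{D}^{T}$ for all $U,V\in\mathcal{D}^{T}$. By the decomposition \eqref{g1}, this is equivalent to the two orthogonality conditions $g(\nabla_{U}V,X)=0$ for every $X\in\mathcal{D}^{\bot}$ and $g(\nabla_{U}V,Z)=0$ for every $Z\in\mathcal{D}^{\theta}$, since these are the complementary factors in $TM$. So the theorem will follow by translating each of these two orthogonality statements into the claimed equations \eqref{g12} and \eqref{g13} using the formulas already established in the previous lemmas.

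For the $\mathcal{D}^{\bot}$-component I would invoke \eqref{g5}, which reads $g(\nabla_{U}V,X)=g(A_{JX}U,JV)$. Hence $g(\nabla_{U}V,X)=0$ for all such $X$ is literally the same as \eqref{g12}. For the $\mathcal{D}^{\theta}$-component I would invoke \eqref{g3} from Lemma \ref{lem01}, namely
\[
g(\nabla_{U}V,Z)=\csc^{2}\!\theta\,g(A_{FZ}JV-A_{FTZ}V,U).
\]
Because $\csc^{2}\!\theta$ never vanishes, $g(\nabla_{U}V,Z)=0$ for all $Z\in\mathcal{D}^{\theta}$ is equivalent to $g(A_{FZ}JV,U)=g(A_{FTZ}V,U)$. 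Rewriting the shape-operator terms via the symmetry relation \eqref{e4}, i.e.\ $g(A_{\xi}P,Q)=g(A_{\xi}Q,P)$, this reads $g(A_{FZ}U,JV)=g(A_{FTZ}U,V)$, which is exactly \eqref{g13}.

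Putting the two pieces together gives both directions simultaneously: $\mathcal{D}^{T}$ is totally geodesic iff $g(\nabla_{U}V,X)=0$ and $g(\nabla_{U}V,Z)=0$ for all $U,V\in\mathcal{D}^{T}$, $X\in\mathcal{D}^{\bot}$, $Z\in\mathcal{D}^{\theta}$, and these are respectively equivalent to \eqref{g12} and \eqref{g13}. I do not anticipate a genuine obstacle here: the real content has already been absorbed into Lemmas \ref{lem01} and \ref{lem02}, and what remains is the routine observation that vanishing of all off-diagonal components of $\nabla_{U}V$ characterizes total geodesicity, together with the harmless cancellation of the nonzero factor $\csc^{2}\!\theta$. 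The only point requiring a little care is the bookkeeping in passing from the arguments of $A_{FZ}$ and $A_{FTZ}$ in \eqref{g3} to the symmetric form displayed in \eqref{g13}, which is handled purely by \eqref{e4}.
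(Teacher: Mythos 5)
Your proposal is correct and follows the paper's own argument essentially verbatim: the paper likewise reduces total geodesicity of $\mathcal{D}^{T}$ to the vanishing of $g(\nabla_{U}V,X)$ and $g(\nabla_{U}V,Z)$ and then cites \eqref{g5} and \eqref{g3}. Your added remarks about cancelling the nonvanishing factor $\csc^{2}\!\theta$ and symmetrizing via \eqref{e4} are exactly the (unstated) bookkeeping the paper relies on.
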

\begin{proof}  Let $M$ be a generalized $J$-induced submanifold of order 1 of a K\"{a}hler  manifold $(\bar{M},J,g)$.
Then, the holomorphic distribution $\mathcal{D}^{T}$ is totally geodesic if and only if
$g(\nabla_{U}V,X)=0$ and $g(\nabla_{U}V,Z)=0$ for all $U,V\in\mathcal{D}^{T}$, $X\in\mathcal{D}^\bot$ and  $Z\in\mathcal{D}^\theta$.
Thus, both assertions follow from (\ref{g5}) and (\ref{g3}), respectively.
\end{proof}
\begin{theorem} \label{thm02} Let $M$ be a generalized $J$-induced submanifold of order 1 of a K\"{a}hler  manifold $(\bar{M},J,g)$. Then
the pointwise slant distribution $\mathcal{D}^\theta$ is integrable if and only if the following equations hold
\begin{equation}\label{g14}
\begin{array}{c}
g(A_{FTZ}V-A_{FZ}JV,W)=g(A_{FTW}V-A_{FW}JV,Z)
\end{array},
\end{equation}
\begin{equation}\label{g15}
\begin{array}{c}
g(A_{JX}TW-A_{FTW}X,Z)=g(A_{JX}TZ-A_{FTZ}X,W)
\end{array}
\end{equation}
for $V\in\mathcal{D}^{T}$, $X\in\mathcal{D}^\bot$ and  $Z,W\in\mathcal{D}^\theta$.
\end{theorem}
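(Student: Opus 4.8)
The plan is to use the standard Frobenius integrability criterion together with the decomposition (\ref{g1}) and the covariant-derivative formulas already recorded in Lemma \ref{lem01} and Lemma \ref{lem02}. Since the induced connection $\nabla$ on $M$ is torsion-free, for $Z,W\in\mathcal{D}^\theta$ we have $[Z,W]=\nabla_Z W-\nabla_W Z$, and by (\ref{g1}) the distribution $\mathcal{D}^\theta$ is integrable if and only if $[Z,W]$ has trivial component in $\mathcal{D}^T$ and trivial component in $\mathcal{D}^\bot$; equivalently,
$$g([Z,W],V)=0 \quad\text{and}\quad g([Z,W],X)=0$$
for all $V\in\mathcal{D}^T$ and $X\in\mathcal{D}^\bot$. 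Thus the whole argument reduces to evaluating these two projections.

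First I would dispose of the $\mathcal{D}^T$-component. Writing $g([Z,W],V)=g(\nabla_Z W,V)-g(\nabla_W Z,V)$ and applying (\ref{g4}) to each summand (with the roles of $Z$ and $W$ interchanged in the second), one obtains a common factor $\csc^2\!\theta$, which is nonzero wherever it is defined. Cancelling it shows that the vanishing of $g([Z,W],V)$ for every $V\in\mathcal{D}^T$ is precisely the identity (\ref{g14}).

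Next I would treat the $\mathcal{D}^\bot$-component in the same fashion: expand $g([Z,W],X)=g(\nabla_Z W,X)-g(\nabla_W Z,X)$ and substitute (\ref{g8}) (taking $Y=X$) into both terms. Again the overall factor $\sec^2\!\theta$ is nonzero, and the antisymmetrization in $Z$ and $W$ reproduces exactly the identity (\ref{g15}). Combining the two equivalences gives the asserted characterization.

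I do not anticipate a genuine obstacle, since all the analytic content has already been packaged into the formulas (\ref{g4}) and (\ref{g8}); the only point that demands care is the bookkeeping, namely matching each projection to the correct lemma equation and checking that the swap $Z\leftrightarrow W$ reproduces (\ref{g14}) and (\ref{g15}) verbatim—with the right orientation of the shape-operator terms—rather than up to an unwanted sign.
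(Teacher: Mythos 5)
Your proposal is correct and follows essentially the same route as the paper: the paper likewise reduces integrability of $\mathcal{D}^\theta$ to the vanishing of $g([Z,W],V)$ and $g([Z,W],X)$ and then reads off (\ref{g14}) and (\ref{g15}) from (\ref{g4}) and (\ref{g8}), respectively. The sign and swap bookkeeping you flag does work out exactly as you anticipate, with the nonvanishing factors $\csc^{2}\!\theta$ and $\sec^{2}\!\theta$ cancelling.
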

\begin{proof} Let $M$ be a generalized $J$-induced submanifold of order 1 of a K\"{a}hler  manifold $(\bar{M},J,g)$.
Then, the pointwise slant  distribution $\mathcal{D}^{\theta}$ is integrable if and only if
$g([Z,W],V)=0$ and $g([Z,W],X)=0$ for all $U\in\mathcal{D}^{T}$, $X\in\mathcal{D}^\bot$ and  $Z,W\in\mathcal{D}^\theta$.
Thus, both assertions follow from (\ref{g4}) and (\ref{g8}), respectively.
\end{proof}
\begin{remark} \label{rema0} From Lemma 1 of \cite{Ro} or Theorem 4.2 of \cite{Tr0}, we know that the totally real distribution
$\mathcal{D}^\bot$ is always integrable.
\end{remark}
\section{Biwarped product submanifolds of K\"{a}hler manifolds}
In this section, we check that the existence of biwarped product submanifolds in the
form $M_{0}\times_{f_{1}}M_{1}\times_{f_{2}}M_{2}$ of a K\"{a}hler  manifold $(\bar{M},J,g)$,
where $M_{0},M_{1}$ and $M_{2}$ are one of the submanifolds given in subsection 2.3.
\subsection{Existence problems}
Chen proved that there do not exist (non-trivial) warped product submanifolds in the form
$M_{\bot}\times_{f}M_{T}$ in a K\"{a}hler  manifold $\bar{M}$ such that $M_{\bot}$ is a totally real and $M_{T}$ is a holomorphic submanifold of $\bar{M}$
in Theorem 3.1 of \cite{Che1}, Thus, we obtain the following result.
\begin{cor} There do not exist (non-trivial) biwarped product submanifolds of type $M_{\bot}\times_{f}M_{T}\times_{\sigma}M_{\theta}$
of a K\"{a}hler  manifold $(\bar{M},J,g)$ such that $M_{\bot}$ is a totally real, $M_{T}$ is a holomorphic and $M_{\theta}$ is
a pointwise slant or slant submanifold of $\bar{M}$.
\end{cor}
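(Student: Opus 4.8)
The plan is to reduce the assertion to the warped product non-existence theorem of Chen quoted just above, by exhibiting the forbidden warped product $M_{\bot}\times_{f}M_{T}$ as a leaf sitting inside the biwarped product. Suppose $M=M_{\bot}\times_{f}M_{T}\times_{\sigma}M_{\theta}$ is a biwarped product submanifold of the stated type. As a manifold $M$ is the product $M_{\bot}\times M_{T}\times M_{\theta}$, so the distribution $TM_{\bot}\oplus TM_{T}$ is integrable with leaves $N_{q}=M_{\bot}\times M_{T}\times\{q\}$, $q\in M_{\theta}$. I would work with a single such leaf $N=N_{q}$, carrying the metric induced from $M$ (equivalently, from $\bar{M}$).

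First I would check that $N$, with this induced metric, is exactly the warped product $M_{\bot}\times_{f}M_{T}$. For $U,V$ tangent to $M_{\bot}$ and $X,Y$ tangent to $M_{T}$, the biwarped metric gives $g(U,V)=g_{\bot}(U,V)$, $g(X,Y)=f^{2}g_{T}(X,Y)$ and $g(U,X)=0$, where $g_{\bot},g_{T}$ are the metrics of $M_{\bot},M_{T}$; the essential point is that the warping function $f=f_{1}$ lives on the base $M_{\bot}$ alone, so fixing $q$ leaves it unchanged and the induced metric on $N$ is precisely that of $M_{\bot}\times_{f}M_{T}$. Next I would verify that, inside the ambient K\"{a}hler manifold $\bar{M}$, the leaf $N$ still has $M_{\bot}$ totally real and $M_{T}$ holomorphic: using the decomposition (\ref{g2}) one has $J(\mathcal{D}^{\bot})\subseteq T^{\bot}M\subseteq T^{\bot}N$ and $J(\mathcal{D}^{T})=\mathcal{D}^{T}\subseteq TN$, so both defining properties descend from $M$ to $N$. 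Hence $N$ is a warped product submanifold of $\bar{M}$ of precisely the type treated by Chen.

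Applying Theorem 3.1 of \cite{Che1} to $N$ then forces this warped product to be trivial, so $f$ is constant on $M_{\bot}$; since $f$ is the same function on every leaf, it is constant on all of $M$. Thus the holomorphic factor $M_{T}$ cannot be genuinely warped over the totally real base, the biwarped product degenerates to a singly warped product, and no non-trivial biwarped product of the prescribed type exists. I would also note that the argument is indifferent to whether $M_{\theta}$ is proper pointwise slant or slant, since that factor enters only through the choice of the fixed point $q$.

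The step I expect to demand the most care is the verification in the second paragraph that the leaf $N$ genuinely meets Chen's hypotheses, i.e.\ that restricting from the biwarped product to a leaf simultaneously preserves the warped product metric and the totally real and holomorphic character of the two factors. Once this bookkeeping is in place, the appeal to Chen's theorem is immediate, which is precisely the force of the word ``Thus'' in the line preceding the statement.
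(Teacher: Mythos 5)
Your proposal is correct and follows essentially the same route as the paper: the paper offers no explicit argument beyond invoking Chen's Theorem 3.1 for warped products $M_{\bot}\times_{f}M_{T}$, and your leaf-restriction argument (fixing $q\in M_{\theta}$, checking the induced metric on $M_{\bot}\times M_{T}\times\{q\}$ is the warped product metric, and verifying via $J(\mathcal{D}^{\bot})\subseteq T^{\bot}M$ and $J(\mathcal{D}^{T})=\mathcal{D}^{T}$ that the leaf is a warped product CR-submanifold) is precisely the reduction that makes that invocation rigorous. The only caveat, inherited from the paper's own phrasing rather than introduced by you, is that the argument forces only $f$ to be constant and says nothing about $\sigma$; you acknowledge this correctly when you observe that the biwarped product merely degenerates to a singly warped product over $M_{\theta}$.
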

In Theorem 3.1 of \cite{S3}, \c{S}ahin showed that there is no (non-trivial) warped product submanifolds of the form $M_{\phi}\times_{f}M_{T}$ in a K\"{a}hler manifold $\bar{M}$ such that $M_{\phi}$ is a proper slant and $M_{T}$ is a holomorphic submanifold of $\bar{M}$. In Theorem 4.1 of \cite{S4}, he also proved  that
there exist no (non-trivial) warped product submanifolds of the form $M_{\theta}\times_{f}M_{T}$ in a K\"{a}hler  manifold $\bar{M}$ such that $M_{\theta}$ is a proper pointwise slant and $M_{T}$ is a holomorphic submanifold of $\bar{M}.$ Hence, we conclude that:
\begin{cor} Let $\bar{M}$ be a K\"{a}hler  manifold. Then there exist no (non-trivial) biwarped product submanifolds in the form $M_{\theta}\times_{f}M_{T}\times_{\sigma}M_{\bot}$ of $\bar{M}$ such that $M_{\theta}$ is a proper pointwise slant or proper slant,
$M_{T}$ is a holomorphic and $M_{\bot}$ is a totally real submanifold of $\bar{M}$.
\end{cor}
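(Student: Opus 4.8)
The plan is to realize the forbidden singly warped product $M_{\theta}\times_{f}M_{T}$ as a totally geodesic leaf of the biwarped product and then invoke the known non-existence theorems. Suppose, for contradiction, that $M=M_{\theta}\times_{f}M_{T}\times_{\sigma}M_{\bot}$ is a non-trivial biwarped product submanifold of $\bar{M}$ of the stated type, and write $M_{0}=M_{\theta}$, $M_{1}=M_{T}$, $M_{2}=M_{\bot}$. Fixing an arbitrary point $q\in M_{\bot}$, I would consider the leaf $N=M_{0}\times_{f}M_{1}\times\{q\}$, whose tangent bundle is $\mathcal{L}(M_{0})\oplus\mathcal{L}(M_{1})$ and whose normal space inside $M$ is $\mathcal{L}(M_{2})$. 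First I would check that $N$ is totally geodesic in $M$: for $A,B\in\mathcal{L}(M_{0})\oplus\mathcal{L}(M_{1})$, the covariant derivative formulas (\ref{a1})--(\ref{a3}) of Lemma \ref{lem001} give $\bar{\nabla}_{A}B\in\mathcal{L}(M_{0})\oplus\mathcal{L}(M_{1})$ in every case (the $\mathcal{L}(M_{2})$-directions never appear), so $\bar{\nabla}_{A}B$ has no $\mathcal{L}(M_{2})$-component.

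Since the immersions $N\hookrightarrow M\hookrightarrow\bar{M}$ are isometric, the metric induced on $N$ from $\bar{M}$ is $g_{0}+f^{2}g_{1}$, which is exactly the warped product metric of $M_{\theta}\times_{f}M_{T}$, where $f$ is the restriction to $M_{0}$ of the first warping function. The tangent distributions of the two factors of $N$ are $TM_{\theta}$ and $TM_{T}$, and the structure $J$ together with $g$ are those of $\bar{M}$; hence, as a submanifold of $\bar{M}$, the factor $M_{\theta}$ is still proper pointwise slant (respectively proper slant) and $M_{T}$ is still holomorphic. Therefore $N$ is a warped product submanifold of $\bar{M}$ of precisely the type ruled out in Theorem 4.1 of \cite{S4} (the proper pointwise slant case) and in Theorem 3.1 of \cite{S3} (the proper slant case).

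Applying those theorems to the leaf $N$ forces the warping function $f$ to be constant. This removes the warping along the holomorphic fibre $M_{T}$: after rescaling $g_{1}$ one sees that $M$ is isometric to the singly warped product $(M_{\theta}\times M_{T})\times_{\sigma}M_{\bot}$, so it cannot be a non-trivial biwarped product of the asserted type. This contradiction proves the corollary.

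The step I expect to demand the most care is the passage from $M$ to $N$: one must be certain that $N$ is a \emph{bona fide} warped product submanifold of $\bar{M}$ carrying exactly the structure required by \cite{S3,S4} -- in particular that $M_{\theta}$ remains \emph{proper} (pointwise) slant and $M_{T}$ remains holomorphic, and that the $M_{\bot}$-direction contributes nothing to either the intrinsic metric or the extrinsic geometry of $N$. This is precisely what the totally geodesic property of $N$ in $M$, read off from Lemma \ref{lem001}, guarantees, so that observation is the hinge of the whole argument.
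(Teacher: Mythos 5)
Your proposal is correct and follows essentially the same route as the paper, which simply cites Theorem 3.1 of \cite{S3} and Theorem 4.1 of \cite{S4} and concludes; the leaf argument you spell out (that $M_{\theta}\times_{f}M_{T}\times\{q\}$ is a totally geodesic submanifold of $M$, hence a genuine warped product submanifold of $\bar{M}$ of the forbidden type) is exactly the reasoning the paper leaves implicit. Your justification via the covariant derivative formulas of Lemma \ref{lem001} is a welcome explicit verification of that hinge step.
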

On the other hand, in Theorem 3.2 of \cite{S3}, \c{S}ahin proved that there exists no (non-trivial)
warped product submanifolds of the form $M_{T}\times_{f}M_{\phi}$ in a K\"{a}hler  manifold $\bar{M}$
such that $M_{T}$ is a holomorphic and $M_{\phi}$ is a proper slant submanifold of $\bar{M}.$ Thus, we get the following result.
\begin{cor} There exist no (non-trivial) biwarped product submanifolds of type $M_{T}\times_{f}M_{\phi}\times_{\sigma}M_{\bot}$
of a K\"{a}hler  manifold $(\bar{M},J,g)$ such that $M_{T}$ is a holomorphic, $M_{\phi}$ is
a proper slant and  $M_{\bot}$ is a totally real  submanifold of $\bar{M}$.
\end{cor}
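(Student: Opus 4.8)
The plan is to reduce the statement to the singly warped product nonexistence theorem of \c{S}ahin (Theorem 3.2 of \cite{S3}) by restricting the putative biwarped product to a totally geodesic slice obtained from the totally real fiber. Suppose, to the contrary, that a non-trivial biwarped product submanifold $N=M_{T}\times_{f}M_{\phi}\times_{\sigma}M_{\bot}$ of $\bar{M}$ exists, with $M_{T}$ holomorphic, $M_{\phi}$ proper slant and $M_{\bot}$ totally real, where $f,\sigma$ are the warping functions on the base $M_{T}$. The induced metric on $N$ then has the biwarped form $g_{M_{T}}+f^{2}g_{M_{\phi}}+\sigma^{2}g_{M_{\bot}}$. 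The idea is that, once the $M_{\bot}$-fiber is frozen, the remaining factor $M_{T}\times_{f}M_{\phi}$ is exactly a warped product of the type that \cite{S3} forbids.

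The key step is to fix a point $q\in M_{\bot}$ and show that the slice $N'=M_{T}\times M_{\phi}\times\{q\}$ is totally geodesic in $N$, so that it is itself an isometrically immersed warped product submanifold of $\bar{M}$. The tangent bundle of $N'$ is spanned by lifts from $\mathcal{L}(M_{T})$ and $\mathcal{L}(M_{\phi})$, and applying Lemma \ref{lem001} to the biwarped product $N$ (which here plays the role of the ambient biwarped product manifold of that lemma) I would read off from (\ref{a1})--(\ref{a3}) that $\nabla_{U}V$, $\nabla_{V}X$ and $\nabla_{X}Z$, for $U,V\in\mathcal{L}(M_{T})$ and $X,Z\in\mathcal{L}(M_{\phi})$, all lie in $\mathcal{L}(M_{T})\oplus\mathcal{L}(M_{\phi})$ and carry no $\mathcal{L}(M_{\bot})$-component (the only potentially dangerous term, the gradient term in (\ref{a3}), lies in $\mathcal{L}(M_{T})$). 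Hence the second fundamental form of $N'$ in $N$ vanishes, the metric induced on $N'$ from $\bar{M}$ is precisely $g_{M_{T}}+f^{2}g_{M_{\phi}}$, and $N'$ is an isometrically immersed warped product $M_{T}\times_{f}M_{\phi}$ in $\bar{M}$. Since holomorphicity of $M_{T}$ and proper slantness of $M_{\phi}$ are pointwise conditions on how $J$ acts on the tangent spaces, they are inherited unchanged by $N'$.

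With the slice in hand, Theorem 3.2 of \cite{S3} applies verbatim: there is no non-trivial warped product submanifold $M_{T}\times_{f}M_{\phi}$ of a K\"{a}hler manifold with $M_{T}$ holomorphic and $M_{\phi}$ proper slant, so the warping function $f$ must be constant. Consequently the fiber $M_{\phi}$ is not genuinely warped, contradicting the assumption that $N$ is a non-trivial biwarped product of the asserted form, and the corollary follows.

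I expect the main obstacle to be one of bookkeeping rather than of ideas: one must make sure that the slice $N'$ reproduces exactly the hypotheses of \cite{S3}, i.e. that the metric $\bar{M}$ induces on $N'$ is the genuine warped product metric $g_{M_{T}}+f^{2}g_{M_{\phi}}$ (so that ``triviality'' of $N'$ really translates into $f$ being constant), which is precisely what the totally geodesic computation above secures. It is worth noting that the analogous slicing against the $M_{\phi}$-fiber would only produce a factor $M_{T}\times_{\sigma}M_{\bot}$, which is a CR-warped product and need not be trivial; thus the argument pins down the warping function $f$ of the slant fiber, and it is exactly this that rules out a genuine biwarped product of the type $M_{T}\times_{f}M_{\phi}\times_{\sigma}M_{\bot}$.
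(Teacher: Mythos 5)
Your proposal is correct and follows essentially the same route as the paper: the paper disposes of this corollary by simply invoking Theorem 3.2 of \cite{S3} (non-existence of non-trivial warped products $M_{T}\times_{f}M_{\phi}$ with $M_{T}$ holomorphic and $M_{\phi}$ proper slant), leaving the reduction implicit. Your contribution is to make that reduction explicit via the totally geodesic slice $M_{T}\times M_{\phi}\times\{q\}$ and Lemma \ref{lem001}, which is exactly the argument the paper's one-line derivation presupposes.
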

Now, we consider (non-trivial) biwarped product submanifolds of the form $M_{T}\times_{f}M_{\bot}\times_{\sigma}M_{\theta}$
in a K\"{a}hler  manifold $(\bar{M},J,g)$ such that $M_{T}$ is a holomorphic, $M_{\bot}$ is a totally real and $M_{\phi}$ is
a pointwise slant submanifold of $\bar{M}$. We first present an example of such a submanifold.
\begin{example} Consider the K\"{a}hler  manifold $\mathbb{R}^{14}$ with usual K\"{a}hler structure.
For $u,v\neq0,1$ and $x,z,w\in(0,\frac{\pi}{2})$ we consider a submanifold $M$ in $\mathbb{R}^{14}$ given by
\begin{align*}
&y_{1}=u\cos\! z,\, y_{2}=v\cos\! z,\, y_{3}=u\cos\! w,\, y_{4}=v\cos\! w ,\\
&y_{5}=u\sin\! z,\, y_{6}=v\sin\! z,\, y_{7}=u\sin\! w,\,y_{8}=v\sin\! w ,\\
&y_{9}=z, \,y_{10}=w,\, y_{11}=u\cos\! x,\, y_{12}=v\cos\! x,\, y_{13}=u\sin\! x,\, y_{14}=v\sin\! x.
\end{align*}
Then, we see that the local frame of the tangent bundle $TM$ of $M$ is spanned by
\begin{align*}
U&=\cos\! z\frac{\partial}{\partial y_{1}}+\cos\! w\frac{\partial}{\partial y_{3}}+\sin\! z\frac{\partial}{\partial y_{5}}+\sin\! w\frac{\partial}{\partial y_{7}}+\cos\! x\frac{\partial}{\partial y_{11}}+\sin\! x\frac{\partial}{\partial y_{13}},\\
V&=\cos\! z\frac{\partial}{\partial y_{2}}+\cos\! w\frac{\partial}{\partial y_{4}}+\sin\! z\frac{\partial}{\partial y_{6}}+\sin\! w\frac{\partial}{\partial y_{8}}+\cos\! x\frac{\partial}{\partial y_{12}}+\sin\! x\frac{\partial}{\partial y_{14}},\\
X&=-u\sin\! x\frac{\partial}{\partial y_{11}}-v\sin\! x\frac{\partial}{\partial y_{12}}+u\cos\! x\frac{\partial}{\partial y_{13}}+v\cos\! x\frac{\partial}{\partial y_{14}},\\
Z&=-u\sin\! z\frac{\partial}{\partial y_{1}}-v\sin\! z\frac{\partial}{\partial y_{2}}+u\cos\! z\frac{\partial}{\partial y_{5}}+v\cos\! z\frac{\partial}{\partial y_{6}}+\frac{\partial}{\partial y_{9}},\\
W&=-u\sin\! w\frac{\partial}{\partial y_{3}}-v\sin\! w\frac{\partial}{\partial y_{4}}+u\cos\! w\frac{\partial}{\partial y_{7}}+v\cos\! w\frac{\partial}{\partial y_{8}}+\frac{\partial}{\partial y_{10}},
\end{align*}
where $(y_{1},...,y_{14})$ are natural coordinates of $\mathbb{R}^{14}$.
Then $\mathcal{D}^{T}=span\{U,V\}$ is a holomorphic, $\mathcal{D}^{\bot}=span\{X\}$ is a totally real and
$\mathcal{D}^{\theta}=span\{Z,W\}$ is a (proper) pointwise slant distribution with the slant function $\theta=\cos^{-1}(\frac{1}{1+u^{2}+v^{2}}).$
Thus, $M$ is a generalized $J$-induced submanifold of order 1 of $\mathbb{R}^{14}.$ Also, one can easily see the distribution
$\mathcal{D}^{T}$ is totally geodesic and the distributions $\mathcal{D}^{\bot}$ and $\mathcal{D}^{\theta}$ are integrable.  If we denote the integral submanifolds of
$\mathcal{D}^{T}, \mathcal{D}^{\bot}$ and $\mathcal{D}^{\theta}$ by $M_{T}, M_{\bot}$  and $M_{\theta}$, respectively,
then the induced metric tensor of $M$ is
\begin{equation}
\label{}\nonumber
\begin{array}{lll}
ds^{2}&=&3(du^{2}+dv^{2})+(u^{2}+v^{2})dx^{2}+(1+u^{2}+v^{2})(dz^{2}+dw^{2})\\
&=&g_{M_{T}}+(u^{2}+v^{2})g_{M_{\bot}}+(1+u^{2}+v^{2})g_{M_{\theta}}.
\end{array}
\end{equation}
Thus, $M=M_{T}\times_{f}M_{\bot}\times_{\sigma}M_{\theta}$ is a (non-trivial) biwarped product
generalized $J$-induced submanifold of order 1 of $\mathbb{R}^{14}$ with warping functions $f=\sqrt{u^{2}+v^{2}}$ and $\sigma=\sqrt{1+u^{2}+v^{2}}.$
\end{example}
\section{Biwarped product generalized $J$-induced submanifolds of order 1\\
in the form $M_{T}\times_{f}M_{\bot}\times_{\sigma}M_{\theta}$ of a K\"{a}hler  manifold}
In this section, we give a characterization for biwarped product generalized $J$-induced submanifolds of order 1
in the form  $M_{T}\times_{f}M_{\bot}\times_{\sigma}M_{\theta}$, where $M_{T}$ is a holomorphic, $M_{\bot}$ is a totally real and $M_{\theta}$ is
a pointwise slant submanifold of a K\"{a}hler  manifold $(\bar{M},J,g)$. After that,
we investigate the behavior of the second fundamental form of such submanifolds and as a result,
we give a necessary and sufficient condition for such manifolds to be locally trivial.
Now, we give one of the main theorems of this paper. We first recall the following fact given in \cite{Di} to prove our theorem.
\begin{remark} \label{rema1} (Remark 2.1 \cite{Di}) Suppose that the tangent bundle of a Riemannian manifold $M$ splints into an orthogonal sum
$TM=\mathcal{D}_{0}\oplus\mathcal{D}_{1}\oplus...\oplus\mathcal{D}_{k}$ of non-trivial distributions such that each $\mathcal{D}_{j}$ is spherical
and its complement in $TM$ is autoparallel for $j\in\{1,2,...,k\}.$ Then the manifold $M$ is locally isometric to a
multiply warped product $M_{0}\times_{f_{1}}M_{1}\times...\times_{f_{k}}M_{k}.$
\end{remark}
\begin{theorem} Let $M$ be a proper generalized $J$-induced submanifold of order 1 of a K\"{a}hler  manifold $(\bar{M},J,g).$
Then $M$ is a locally biwarped product submanifold of the form $M_{T}\times_{f}M_{\bot}\times_{\sigma}M_{\theta}$ if and only if
\begin{align}
&A_{JX}V=-JV(\lambda)X, \label{g16}\\
&A_{FTZ}V-A_{FZ}JV=-\sin^{2}\!\theta V(\mu)Z, \label{g17}
\end{align}
for some functions $\lambda$ and $\mu$ satisfying $X(\lambda)=Z(\lambda)=0$ and $X(\mu)=Z(\mu)=0,$ and
\begin{align}
&g(A_{JY}TZ,X)=g(A_{FTZ}Y,X),\label{k1}\\
&g(A_{JY}TW,Z)=g(A_{FTZ}Y,W). \label{k2}
\end{align}
for $V\in\mathcal{D}^{T}$,\, $X,Y\in\mathcal{D}^\bot$ and  $Z,W\in\mathcal{D}^\theta$.
\end{theorem}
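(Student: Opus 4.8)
The plan is to obtain the splitting from Remark~\ref{rema1}, applied with $\mathcal{D}_0=\mathcal{D}^T$, $\mathcal{D}_1=\mathcal{D}^\bot$ and $\mathcal{D}_2=\mathcal{D}^\theta$. Concretely, I must show that $\mathcal{D}^T$ is totally geodesic, that each fiber $\mathcal{D}^\bot$, $\mathcal{D}^\theta$ is spherical, and that the complementary distributions $\mathcal{D}^T\oplus\mathcal{D}^\theta$ and $\mathcal{D}^T\oplus\mathcal{D}^\bot$ are autoparallel; the warping functions will then come out as $f=e^{\lambda}$ and $\sigma=e^{\mu}$. The decisive feature is that Lemmas~\ref{lem01} and \ref{lem02} already express, through the shape operator, all components of the induced connection $\nabla$ that mix the three distributions, so the theorem reduces to inserting \eqref{g16}--\eqref{k2} into \eqref{g3}--\eqref{g11} and reading off the geometric conclusions, and conversely. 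Recall also that $\mathcal{D}^\bot$ is automatically integrable by Remark~\ref{rema0}.

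For the necessity direction I would start from $M=M_T\times_f M_\bot\times_\sigma M_\theta$ and put $\lambda=\ln f$, $\mu=\ln\sigma$. Lemma~\ref{lem001} then applies with the induced connection $\nabla$ in the role of $\bar\nabla$, giving $\nabla_VX=V(\lambda)X$, $\nabla_VZ=V(\mu)Z$ and $\nabla_XZ=0$ for $V\in\mathcal{D}^T$, $X\in\mathcal{D}^\bot$, $Z\in\mathcal{D}^\theta$; moreover $\operatorname{grad}\lambda,\operatorname{grad}\mu\in\mathcal{D}^T$, which yields $X(\lambda)=Z(\lambda)=0$ and $X(\mu)=Z(\mu)=0$. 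To recover \eqref{g16} I would compute the three components of the tangent vector $A_{JX}V$ by pairing it with $JU$, with $Z$, and with $Y$, using the Gauss and Weingarten formulas \eqref{e2}--\eqref{e3}, the parallelism \eqref{e6} of $J$, and the decomposition \eqref{e7}: the first two pairings vanish and the third equals $-JV(\lambda)g(X,Y)$, so $A_{JX}V=-JV(\lambda)X$. The slant analogue \eqref{g17} is obtained in the same manner using \eqref{e8}, while \eqref{k1} and \eqref{k2} follow from $\nabla_XZ=0$ together with metric compatibility.

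For the sufficiency direction I would run these implications backwards. Substituting \eqref{g16} into \eqref{g5} gives $g(\nabla_UV,X)=0$ and substituting \eqref{g17} into \eqref{g3} gives $g(\nabla_UV,Z)=0$, so $\mathcal{D}^T$ is totally geodesic. Inserting \eqref{g16} into \eqref{g7}, and using $A_{JY}(JU)=U(\lambda)Y$ (a consequence of \eqref{g16} and $J^2=-I$), yields $g(\nabla_XY,U)=-U(\lambda)g(X,Y)$, i.e. the leaf of $\mathcal{D}^\bot$ is totally umbilical in $M$ with mean curvature vector $-\operatorname{grad}\lambda$; symmetrically, \eqref{g17} in \eqref{g4} makes the leaf of $\mathcal{D}^\theta$ totally umbilical with mean curvature $-\operatorname{grad}\mu$. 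Feeding \eqref{k1} into \eqref{g10} forces $g(\nabla_XY,Z)=0$, so $\operatorname{grad}\lambda$ has no $\mathcal{D}^\theta$-component; feeding \eqref{k2} into \eqref{g8} (after symmetrizing in $Z,W$, which already kills the symmetric part of $g(\nabla_ZW,Y)$, and then invoking the integrability of $\mathcal{D}^\theta$ from Theorem~\ref{thm02}) forces $g(\nabla_ZW,Y)=0$, so $\operatorname{grad}\mu$ has no $\mathcal{D}^\bot$-component. Together with \eqref{g11} these vanishings give the autoparallelism of both complements. Finally, since $X(\lambda)=Z(\lambda)=0$, the symmetry of $\operatorname{Hess}\lambda$ shows $g(\nabla_X\operatorname{grad}\lambda,U)=g(\nabla_X\operatorname{grad}\lambda,Z)=0$, so the mean curvature of the $\mathcal{D}^\bot$-leaf is parallel in its normal connection; the same argument with $\mu$ handles $\mathcal{D}^\theta$. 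Hence both fibers are spherical and Remark~\ref{rema1} delivers the local biwarped product.

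I expect the genuine difficulty to lie in the sufficiency direction, and in two interlocking points rather than in the umbilicity, which drops out mechanically. First, upgrading \emph{umbilical} to \emph{spherical} uses the hypotheses $X(\lambda)=Z(\lambda)=0$ and $X(\mu)=Z(\mu)=0$ through the symmetry of the Hessian; this is exactly where those side conditions are indispensable. Second, and more delicate, is the autoparallelism of the complements along the mixed directions, i.e. showing that $g(\nabla_UX,Z)$, $g(\nabla_ZW,Y)$ and their analogues vanish: these quantities are not pinned down by \eqref{g16}--\eqref{k2} through pure shape-operator algebra, and closing them interlocks with the integrability of $\mathcal{D}^\theta$ (the conditions \eqref{g14}--\eqref{g15} of Theorem~\ref{thm02}, of which only \eqref{g14} reduces cleanly to \eqref{g17}). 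I anticipate that this step requires returning to the Gauss--Weingarten and Kähler identities \eqref{e2}--\eqref{e6}, the pointwise slant relations \eqref{e8}--\eqref{e10}, and the properness assumption $\cos\theta\neq0$, $\sin\theta\neq0$. I would therefore treat the cross-fiber term $g(\nabla_ZW,Y)=0$, together with the integrability of $\mathcal{D}^\theta$, as the crux of the argument.
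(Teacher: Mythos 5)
Your proposal follows essentially the same route as the paper: both directions reduce to Lemma \ref{lem001} and the shape-operator identities of Lemmas \ref{lem01}--\ref{lem02}, with $\lambda=\ln f$, $\mu=\ln\sigma$, and the converse assembled via Theorems \ref{thm01}--\ref{thm02}, sphericality of the two fibers, autoparallelism of the complements, and the Dillen--N\"olker splitting of Remark \ref{rema1}. Your explicit symmetrization of $g(\nabla_ZW,Y)$ in $Z,W$ and the Hessian-symmetry phrasing of the parallel-mean-curvature step are just more careful renderings of computations the paper performs directly, so the argument is correct and not genuinely different.
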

\begin{proof} Let $M$ be a biwarped product proper generalized $J$-induced submanifold of order 1 of a K\"{a}hler  manifold $(\bar{M},J,g)$
in the form $M_{T}\times_{f}M_{\bot}\times_{\sigma}M_{\theta}$. Then for any $V\in\mathcal{D}^{T}$, $X\in\mathcal{D}^\bot$ and  $Z\in\mathcal{D}^\theta$
using (\ref{e2})$\sim$(\ref{e3}) and  (\ref{e5})$\sim$(\ref{e6}),  we have
\begin{align*}
g(A_{JX}V,U)=-g(\bar{\nabla}_{U}JX,V)=g(\bar{\nabla}_{U}X,JV)=g(\nabla_{U}X,JV).
\end{align*}
Here, we know $\nabla_{U}X=U(\ln f)X$ from (\ref{a2}). Thus, we obtain
\begin{equation} \label{g18}
\begin{array}{c}
g(A_{JX}V,U)=U(\ln f)g(X,JV)=0
\end{array},
\end{equation}
since $g(X,JV)=0.$ Similarly, we have
\begin{align*}
g(A_{JX}V,Z)=-g(\bar{\nabla}_{Z}JX,V)=g(\bar{\nabla}_{Z}X,JV)=g(\nabla_{Z}X,JV).
\end{align*}
Thus, we obtain
\begin{equation} \label{g19}
\begin{array}{c}
g(A_{JX}V,Z)=0
\end{array},
\end{equation}
since $\nabla_{Z}X=0$ from (\ref{a3}). Next, by a similar argument, for $Y\in\mathcal{D}^\bot$, we have
\begin{align*}
g(A_{JX}V,Y)=-g(\bar{\nabla}_{Y}JX,V)=g(\bar{\nabla}_{Y}X,JV)=-g(\nabla_{Y}JV,X).
\end{align*}
Using (\ref{a2}), we obtain
\begin{equation} \label{g20}
\begin{array}{c}
g(A_{JX}V,Y)=g(-JV(\ln f)X,Y)
\end{array}.
\end{equation}
Moreover, we have $X(\ln f)=Z(\ln f)=0$, since $f$ depends on only points of $M_{T}.$ So, we conclude that $\lambda=\ln f.$
Thus, ffrom (\ref{g18})$\sim$(\ref{g20}), it follows that (\ref{g16}). Now,
we prove (\ref{g17}). Using (\ref{e2})$\sim$(\ref{e3}) and (\ref{e5})$\sim$(\ref{e8}), we have
\begin{align*}
g(A_{FTZ}V-A_{FZ}JV,U)=&-g(\bar{\nabla}_{V}FTZ,U)+g(\bar{\nabla}_{JV}FZ,U)\\
=&-g(\bar{\nabla}_{V}JTZ,U)+g(\bar{\nabla}_{V}T^{2}Z,U)\\
&+g(\bar{\nabla}_{JV}JZ,U)-g(\bar{\nabla}_{JV}TZ,U)\\
=&+g(\bar{\nabla}_{V}TZ,JU)-\cos^{2}\!\theta g(\bar{\nabla}_{V}Z,U)\\
&-g(\bar{\nabla}_{JV}Z,JU)-g(\bar{\nabla}_{JV}TZ,U)\\
=&+g(\nabla_{V}TZ,JU)-\cos^{2}\!\theta g(\nabla_{V}Z,U)\\
&-g(\nabla_{JV}Z,JU)-g(\nabla_{JV}TZ,U).\qquad\qquad
\end{align*}
Here, if we use (\ref{a2}), we obtain
\begin{align*}
g(A_{FTZ}V-A_{FZ}JV,U)=&V(\ln \sigma)g(TZ,JU)-\cos^{2}\!\theta JV(\ln \sigma)g(Z,U)\\
&-JV(\ln \sigma)g(Z,JU)-V(\ln \sigma)g(TZ,U).
\end{align*}
Hence, we get
\begin{equation} \label{g21}
\begin{array}{c}
g(A_{FTZ}V-A_{FZ}JV,U)=0
\end{array},
\end{equation}
since $g(TZ,JU)=g(Z,U)=g(Z,JU)=g(TZ,U)=0.$ Similarly, we have
\begin{align*}
g(A_{FTZ}V-A_{FZ}JV,X)=&-g(\bar{\nabla}_{X}FTZ,V)+g(\bar{\nabla}_{X}FZ,JV)\\
=&-g(\bar{\nabla}_{X}JTZ,V)+g(\bar{\nabla}_{X}T^{2}Z,V)\\
&+g(\bar{\nabla}_{X}JZ,JV)-g(\bar{\nabla}_{X}TZ,JV)\\
=&+g(\bar{\nabla}_{X}TZ,JV)-\cos^{2}\!\theta g(\bar{\nabla}_{X}Z,V)\\
&-g(\bar{\nabla}_{X}Z,V)-g(\bar{\nabla}_{X}TZ,JV)\\
=&+g(\nabla_{X}TZ,JV)-\cos^{2}\!\theta g(\nabla_{X}Z,V)\\
&-g(\nabla_{X}Z,V)-g(\nabla_{X}TZ,JV).\qquad\qquad
\end{align*}
Here, we know $\nabla_{X}TZ=\nabla_{X}Z=0$ from (\ref{a3}). So, we get
\begin{equation} \label{g22}
\begin{array}{c}
g(A_{FTZ}V-A_{FZ}JV,X)=0
\end{array}.
\end{equation}
On the other hand, using (\ref{g4}) , we have
\begin{equation} \nonumber
\begin{array}{c}
g(A_{FTZ}V-A_{FZ}JV,W)=-\sin^{2}\!\theta g(\nabla_{W}V,Z)
\end{array}.
\end{equation}
Using (\ref{a2}), we get
\begin{equation}  \label{g23}
\begin{array}{c}
g(A_{FTZ}V-A_{FZ}JV,W)=g(-\sin^{2}\!\theta V(\ln \sigma)Z,W)
\end{array}.
\end{equation}
Moreover, we have $X(\ln \sigma)=Z(\ln \sigma)=0$, since $\sigma$ depends on only points of $M_{T}.$ So, we conclude that $\mu=\ln \sigma.$
Thus, (\ref{g17}) follows form (\ref{g21}), (\ref{g22}) and (\ref{g23}). Next, we prove (\ref{k1}) and (\ref{k2}).
Using (\ref{g10}) and (\ref{a3}), we have
\begin{equation} \nonumber
\begin{array}{c}
g(A_{JY}X,TZ)-g(A_{FTZ}X,Y)=-\cos^{2}\!\theta g(\nabla_{X}Z,Y)=0
\end{array}.
\end{equation}
Thus, (\ref{k1}) follows. Similarly, using (\ref{g18}) and (\ref{a3}), we have
\begin{equation} \nonumber
\begin{array}{c}
g(A_{JY}TW,Z)=g(A_{FTZ}Y,W)=-\cos^{2}\!\theta g(\nabla_{Z}Y,W)=0
\end{array}.
\end{equation}
So, we get (\ref{k2}).\\

Conversely, assume that $M$ is a proper generalized $J$-induced submanifold of order 1 of a K\"{a}hler  manifold $(\bar{M},J,g)$
such that (\ref{g16})$\sim$(\ref{k2}) hold. Then we satisfy (\ref{g12}) and (\ref{g13}) by using (\ref{g16}) and (\ref{g17}), respectively.
Thus, by Theorem \ref{thm01}, the holomorphic distribution $\mathcal{D}^{T}$ is totally geodesic and as a result it is integrable.
By (\ref{k1}) and (\ref{k2}), we easily satisfy (\ref{g14}) and (\ref{g15}). Thus, by Theorem \ref{thm02},
the pointwise slant distribution $\mathcal{D}^\theta$ is integrable. Also, by Remark \ref{rema0},
the totally real  distribution $\mathcal{D}^\bot$ is always integrable. Let $M_{T}$, $M_{\bot}$ and $M_{\theta}$ be the
integral manifolds of $\mathcal{D}^{T}$, $\mathcal{D}^\bot$ and $\mathcal{D}^\theta$, respectively. If we denote the second fundamental form of
$M_{\bot}$ in $M$ by $h^{\bot}$, for $X,Y\in\mathcal{D}^\bot$ and  $Z\in\mathcal{D}^\theta$, using (\ref{e2}), (\ref{g10}) and (\ref{k1}),
we have
\begin{equation}  \label{g24}
\begin{array}{c}
g(h^{\bot}(X,Y),Z)=g(\nabla_{X}Y,Z)=0
\end{array}.
\end{equation}
For any $X,Y\in\mathcal{D}^\bot$ and $V\in\mathcal{D}^T$, using (\ref{e2}), (\ref{g6}) and (\ref{g16}), we have
\begin{equation}  \nonumber
\begin{array}{c}
g(h^{\bot}(X,Y),V)=g(\nabla_{X}Y,V)=-g(A_{JY}X,JV)=-V(\lambda)g(X,Y)
\end{array}.
\end{equation}
After some calculation, we obtain
\begin{equation}  \label{g25}
\begin{array}{c}
g(h^{\bot}(X,Y),V)=g(-g(X,Y)\nabla\lambda,V)
\end{array},
\end{equation}
where $\nabla\lambda$ is the gradient of $\lambda$. Thus, from (\ref{g24}) and (\ref{g25}), we conclude that
\begin{equation}  \nonumber
\begin{array}{c}
h^{\bot}(X,Y)=-g(X,Y)\nabla\lambda
\end{array}.
\end{equation}
This equation says that $M_{\bot}$ is totally umbilic in $M$ with the mean curvature vector field $-\nabla\lambda.$
Now, we show that $-\nabla\lambda$ is parallel. We have to satisfy $g(\nabla_{X}\nabla\lambda,E)=0$
for $X\in\mathcal{D}^\bot$ and $E\in(\mathcal{D}^\bot)^{\bot}=\mathcal{D}^T\oplus\mathcal{D}^\theta.$
Here, we can put $E=V+Z,$ where $V\in\mathcal{D}^T$ and $Z\in\mathcal{D}^\theta.$ By direct computations, we obtain
\begin{align*}
g(\nabla_{X}\nabla\lambda,E)=&\{Xg(\nabla\lambda,E)-g(\nabla\lambda,\nabla_{X}E)\}\\
=&\{X(E(\lambda))-[X,E]\lambda-g(\nabla\lambda,\nabla_{E}X)\}\\
=&\{[X,E]\lambda+E(X(\lambda))-[X,E]\lambda-g(\nabla\lambda,\nabla_{E}X)\}\\
=&-g(\nabla\lambda,\nabla_{V}X)-g(\nabla\lambda,\nabla_{Z}X),
\end{align*}
since $X(\lambda)=0.$ Here, for any $U\in\mathcal{D}^T$, we have $g(\nabla_{V}X,U)=-g(\nabla_{V}U,X)=0,$
since $M_{T}$ is totally geodesic in $M$. Thus, $\nabla_{V}X\in\mathcal{D}^\bot$ or $\nabla_{V}X\in\mathcal{D}^\theta.$
In either case, we have
\begin{equation} \label{g26}
\begin{array}{c}
g(\nabla\lambda,\nabla_{V}X)=0
\end{array}.
\end{equation}
On the other hand, from (\ref{g9}), we have $g(\nabla_{Z}X,U)=-g(A_{JX}JU,Z).$ Here, using (\ref{k1}),
we obtain  $g(\nabla_{Z}X,U)=0.$ That is; $\nabla_{Z}X\in\mathcal{D}^\bot$ or $\nabla_{Z}X\in\mathcal{D}^\theta.$
In either case, we get
\begin{equation} \label{g27}
\begin{array}{c}
g(\nabla\lambda,\nabla_{Z}X)=0
\end{array}.
\end{equation}
From (\ref{g26}) and  (\ref{g27}), we find
\begin{equation} \nonumber
\begin{array}{c}
g(\nabla_{X}\nabla\lambda, E)=0
\end{array}.
\end{equation}
Thus, $M_{\bot}$ is spherical, since it is also totally umbilic. Consequently, $\mathcal{D}^\bot$ is spherical.\\
Next, we show that $\mathcal{D}^\theta$ is spherical. Let $h^{\theta}$ denote the second fundamental form of $M_{\theta}$
in $M.$ Then for $Z,W\in\mathcal{D}^\theta$ and  $X\in\mathcal{D}^\bot$, using (\ref{e2}), (\ref{g8}) and (\ref{k1}),
we have
\begin{equation}  \label{g28}
\begin{array}{c}
g(h^{\theta}(Z,W),X)=g(\nabla_{Z}W,X)=0
\end{array}.
\end{equation}
On the other hand, for any $V\in\mathcal{D}^T$, using (\ref{e2}) and (\ref{g4}), we have
\begin{equation}  \nonumber
\begin{array}{c}
g(h^{\theta}(Z,W),V)=g(\nabla_{Z}W,V)=\csc^{2}\!\theta g(A_{FTW}V-A_{FW}JV,Z)
\end{array}.
\end{equation}
Using (\ref{g17}), we obtain
\begin{equation} \nonumber
\begin{array}{c}
g(h^{\theta}(Z,W),V)=-V(\mu)g(Z,W)
\end{array}.
\end{equation}
After some calculation, we get
\begin{equation} \label{g29}
\begin{array}{c}
g(h^{\theta}(Z,W),V)=-g(g(Z,W)\nabla\mu,V)
\end{array},
\end{equation}
where $\nabla\mu$ is the gradient of $\mu$. Thus, from (\ref{g28}) and (\ref{g29}), we deduce that
\begin{equation}  \nonumber
\begin{array}{c}
h^{\theta}(Z,W)=-g(Z,W)\nabla\mu
\end{array}.
\end{equation}
It means that $M_{\theta}$ is totally umbilic in $M$ with the mean curvature vector field $-\nabla\mu.$
What's left is to show that $-\nabla\mu$ is parallel. We have to satisfy $g(\nabla_{Z}\nabla\mu,E)=0$
for $Z\in\mathcal{D}^\theta$ and $E\in(\mathcal{D}^\theta)^{\bot}=\mathcal{D}^T\oplus\mathcal{D}^\bot.$
Here, $E=V+X,$ for $V\in\mathcal{D}^T$ and $X\in\mathcal{D}^\bot.$ Upon direct calculation, we obtain
\begin{align*}
g(\nabla_{Z}\nabla\mu,E)=&\{Zg(\nabla\mu,E)-g(\nabla\mu,\nabla_{Z}E)\}\\
=&\{Z(E(\mu))-[Z,E]\mu-g(\nabla\mu,\nabla_{E}Z)\}\\
=&\{[Z,E]\mu+E(Z(\mu))-[Z,E]\mu-g(\nabla\mu,\nabla_{E}Z)\}                                     \\
=&-g(\nabla\mu,\nabla_{V}Z)-g(\nabla\mu,\nabla_{X}Z),
\end{align*}
since $Z(\mu)=0.$ Here, for any $U\in\mathcal{D}^T$, using (\ref{g16}) and (\ref{g11}), we have
\begin{equation}  \nonumber
\begin{array}{c}
g(\nabla_{X}Z,U)=-\csc^{2}\!\theta \{g(A_{FZ}JU-A_{FTZ}U,X)\}
\end{array}.
\end{equation}
So, $\nabla_{X}Z\in\mathcal{D}^\bot$ or $\nabla_{V}X\in\mathcal{D}^\theta.$ Hence
\begin{equation} \label{g30}
\begin{array}{c}
g(\nabla\mu,\nabla_{X}Z)=0
\end{array},
\end{equation}
since $\nabla\mu\in\mathcal{D}^T.$ On the other hand, since $M_{T}$ is totally geodesic in $M$, we have $g(\nabla_{V}Z,U)=-g(\nabla_{V}U,Z)=0.$
Hence, $\nabla_{V}Z\in\mathcal{D}^\bot$ or $\nabla_{V}Z\in\mathcal{D}^\theta.$ So, we get
\begin{equation} \label{g31}
\begin{array}{c}
g(\nabla\mu,\nabla_{V}Z)=0
\end{array}.
\end{equation}
By (\ref{g30}) and  (\ref{g31}), we find
\begin{equation} \nonumber
\begin{array}{c}
g(\nabla_{Z}\nabla\mu, E)=0
\end{array}.
\end{equation}
Lastly, we prove that $(\mathcal{D}^\bot)^{\bot}=\mathcal{D}^T\oplus\mathcal{D}^\theta$ and $(\mathcal{D}^\theta)^{\bot}=\mathcal{D}^T\oplus\mathcal{D}^\bot$
are autoparallel. In fact, $\mathcal{D}^T\oplus\mathcal{D}^\theta$ is autoparallel if and only if all four types of covariant derivatives
$\nabla_{U}V, \nabla_{U}Z, \nabla_{Z}U, \nabla_{Z}W$ are again in $\mathcal{D}^T\oplus\mathcal{D}^\bot$ for $U,V\in\mathcal{D}^T$ and $X\in\mathcal{D}^\bot.$ This is equivalent to say that all four inner products $g(\nabla_{U}V,X), g(\nabla_{U}Z,X), g(\nabla_{Z}U,X)$,
$g(\nabla_{Z}W, X)$ vanish, where $U,V\in\mathcal{D}^T$,\, $Z,W\in\mathcal{D}^\theta$ and $X\in\mathcal{D}^\bot.$
Using (\ref{g5}), (\ref{g5}) and (\ref{g16}), we get
\begin{equation} \nonumber
\begin{array}{c}
g(\nabla_{U}V,X)=g(\nabla_{Z}U,X)=0
\end{array}.
\end{equation}
By (\ref{g6}), (\ref{g8}) and (\ref{k1}), we get
\begin{equation} \nonumber
\begin{array}{c}
g(\nabla_{U}Z,X)=g(\nabla_{Z}W,X)=0
\end{array}.
\end{equation}
Thus, $\mathcal{D}^T\oplus\mathcal{D}^\theta$ is autoparallel. On the other hand,
$\mathcal{D}^T\oplus\mathcal{D}^\bot$ is autoparallel if and only if
all four inner products $g(\nabla_{U}V,Z), g(\nabla_{U}X,Z), g(\nabla_{X}U,Z), g(\nabla_{X}Y,Z)$
vanish. Firstly, we have already $g(\nabla_{U}X,Z)=0$ from above. Using (\ref{g3}), (\ref{g11}) and (\ref{g17}), we get
\begin{equation} \nonumber
\begin{array}{c}
g(\nabla_{U}V,Z)=g(\nabla_{X}U,Z)=0
\end{array}.
\end{equation}
Using (\ref{g10}) and (\ref{k1}), we find
\begin{equation} \nonumber
\begin{array}{c}
g(\nabla_{X}Y,Z)=0
\end{array}.
\end{equation}
So, $\mathcal{D}^T\oplus\mathcal{D}^\bot$ is autoparallel.
Thus, by Remark \ref{rema1}, $M$ is locally biwarped product submanifold of the form $M_{T}\times_{f}M_{\bot}\times_{\sigma}M_{\theta}.$
\end{proof}
Let $M=M_{T}\times_{f}M_{\bot}\times_{\sigma}M_{\theta}$ be a non-trivial biwarped product generalized
$J$-induced submanifold of order 1 of a K\"{a}hler manifold $(\bar{M},J,g).$ Next, we  investigate the behavior of the second fundamental form $h$ of such submanifolds.
\begin{lemma} \label{lem1} Let $M$ be a biwarped product generalized $J$-induced submanifold of order 1 in the form $M_{T}\times_{f}M_{\bot}\times_{\sigma}M_{\theta}$ of a K\"{a}hler  manifold $(\bar{M},J,g)$. Then for the second fundamental form $h$ of $M$ in $\bar{M}$, we have
\begin{align}
&g(h(U,V),JX)=0\,,\qquad\qquad\label{e12} \\
&g(h(V,X),JY)=-JV(\ln f)g(X,Y)\,,\label{e13} \\
&g(h(V,Z),JX)=0\,,\qquad\qquad\qquad\qquad\label{e14}
\end{align}
where $U,V\in\mathcal{D}^T$,\, $X,Y\in\mathcal{D}^\bot$ and  $Z\in\mathcal{D}^\theta$.
\end{lemma}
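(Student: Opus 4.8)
The plan is to reduce all three identities to statements about the shape operator through the duality (\ref{e4}) and then to quote the relation (\ref{g16}) that was just established in the theorem. The first thing I would record is that, since $\mathcal{D}^\bot$ is totally real, $J(\mathcal{D}^\bot)\subseteq T^\bot M$ (this is the first summand in the normal decomposition (\ref{g2})); hence $JX$ and $JY$ are genuine normal vector fields, so (\ref{e4}) legitimately applies to $g(h(\cdot,\cdot),JX)$ and $g(h(\cdot,\cdot),JY)$. I also recall from the proof of the characterization theorem that the function $\lambda$ occurring in (\ref{g16}) is exactly $\ln f$, so that (\ref{g16}) reads $A_{JX}W=-JW(\ln f)X$ for every $W\in\mathcal{D}^{T}$.

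With these two observations the computation is short. For (\ref{e12}) I would write $g(h(U,V),JX)=g(A_{JX}U,V)$ by (\ref{e4}) and substitute (\ref{g16}) to obtain $-JU(\ln f)\,g(X,V)$, which vanishes because $X\in\mathcal{D}^\bot$ is orthogonal to $V\in\mathcal{D}^{T}$. The same two strokes settle (\ref{e14}): $g(h(V,Z),JX)=g(A_{JX}V,Z)=-JV(\ln f)\,g(X,Z)=0$, now using $\mathcal{D}^\bot\perp\mathcal{D}^\theta$. For (\ref{e13}) the calculation is identical but no orthogonality collapse occurs: $g(h(V,X),JY)=g(A_{JY}V,X)=-JV(\ln f)\,g(Y,X)$, and since $g(Y,X)=g(X,Y)$ this is precisely the asserted formula.

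If one prefers a derivation that does not invoke (\ref{g16}), I would instead expand by the Gauss formula (\ref{e2}), push $J$ across the metric using (\ref{e5}) together with the K\"{a}hler condition (\ref{e6}) in the form $\bar{\nabla}_{U}(JV)=J\bar{\nabla}_{U}V$, and then separate tangential and normal parts via (\ref{e2})--(\ref{e3}). For (\ref{e12}) this lands on $-g(\nabla_{U}JV,X)$, which is zero because $JV\in\mathcal{D}^{T}$ and $\nabla_{U}JV$ stays in the totally geodesic base $\mathcal{D}^{T}$ by (\ref{a1}); for (\ref{e13}) a single Weingarten step (\ref{e3}) on the normal field $JX$ followed by the base--fiber formula (\ref{a2}) reproduces the warping term $-JV(\ln f)g(X,Y)$, i.e.\ it simply re-runs the computation that produced (\ref{g16}).

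I do not expect any serious obstacle here: the entire content is carried by (\ref{g16}) and the mutual orthogonality of $\mathcal{D}^{T}$, $\mathcal{D}^\bot$ and $\mathcal{D}^\theta$. The only points demanding care are bookkeeping ones, namely confirming that $JX$ and $JY$ actually lie in the normal bundle so that (\ref{e4}) is valid, correctly identifying $\mathcal{D}^{T}$ as the base so that the warping function attached to the $M_\bot$ factor is $f$, and keeping the sign straight when commuting $J$ through the metric via $g(JA,B)=-g(A,JB)$.
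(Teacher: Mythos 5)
Your proposal is correct, but your primary route is not the one the paper takes. The paper proves the lemma by a direct computation: it expands $g(h(U,V),JX)$ via the Gauss formula (\ref{e2}), moves $J$ across the metric using (\ref{e5})--(\ref{e6}), and then invokes the biwarped-product connection formulas (\ref{a1})--(\ref{a3}) (e.g.\ $\nabla_{U}X=U(\ln f)X$ and $\nabla_{X}JV=JV(\ln f)X$) to finish --- exactly the ``alternative derivation'' you sketch in your third paragraph. Your main argument instead observes that (\ref{e12}), (\ref{e13}), (\ref{e14}) are nothing but the components of $g(A_{JX}V,\cdot)$ against $\mathcal{D}^{T}$, $\mathcal{D}^{\bot}$, $\mathcal{D}^{\theta}$, so they follow in one line each from (\ref{e4}) together with the already-established identity (\ref{g16}) (with $\lambda=\ln f$). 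This is legitimate and not circular: the forward direction of the characterization theorem proves (\ref{g16}) for a biwarped product of this form without using the lemma, and indeed its proof contains equations (\ref{g18})--(\ref{g20}), which are literally the three assertions of the lemma. What your route buys is economy and transparency --- it makes explicit that the lemma is a repackaging of (\ref{g16}) --- at the cost of depending on the theorem; the paper's self-contained computation keeps the lemma independent, which matters only stylistically here. Your bookkeeping points (that $JX,JY$ lie in $J(\mathcal{D}^{\bot})\subseteq T^{\bot}M$ by (\ref{g2}), and the sign in $g(JA,B)=-g(A,JB)$) are exactly the right things to check, and the orthogonality collapses you use are all valid.
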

\begin{proof} Using (\ref{e2}),(\ref{e5}) and (\ref{e6}), we have
\begin{equation} \nonumber
\begin{array}{c}
g(h(U,V),JX)=g(\bar{\nabla}_{U}V,JX)=-g(\bar{\nabla}_{U}JX,V)=g(\bar{\nabla}_{U}X,JV)
\end{array}
\end{equation}
for $U,V\in\mathcal{D}^T$ and $X\in\mathcal{D}^\bot$. Again, using (\ref{e2}),
we obtain $g(h(U,V),JX)=g(\nabla_{U}X,JV)$. Here, we know $\nabla_{U}X=U(\ln f)X$ from (\ref{a2}). Thus, we obtain
$g(h(U,V),JX)=U(\ln f)g(X,V)=0$, since $g(X,V)=0.$ So, (\ref{e12}) follows. Now, let
$V\in\mathcal{D}^T$ and $X,Y\in\mathcal{D}^\bot$. Then using (\ref{e2}), (\ref{e5}) and (\ref{e6}), we have
\begin{equation} \nonumber
\begin{array}{c}
g(h(V,X),JY)=g(\bar{\nabla}_{X}V,JY)=-g(\bar{\nabla}_{X}JV,Y)=-g(\nabla_{X}JV,Y)
\end{array}.
\end{equation}
Here, we know  $\nabla_{X}JV=JV(\ln f)X$ from (\ref{a2}). Thus, we get (\ref{e13}). The last assertion can be obtained in a similar method.
\end{proof}
The previous lemma shows partially us the behavior of the second fundamental form $h$ of the biwarped product generalized
$J$-induced submanifolds of order 1 has the form $M_{T}\times_{f}M_{\bot}\times_{\sigma}M_{\theta}$ in the normal subbundle $J(\mathcal{D}^T).$
\begin{lemma} \label{lem2} Let $M$ be a biwarped product generalized $J$-induced submanifold of order 1 in the form $M_{T}\times_{f}M_{\bot}\times_{\sigma}M_{\theta}$ of a K\"{a}hler manifold $(\bar{M},J,g)$. Then for the second fundamental form $h$ of $M$ in $\bar{M}$, we have
\begin{align}
&g(h(U,V),FZ)=0\,,\label{e15} \\
&g(h(V,X),FZ)=0\,,\label{e16} \\
&g(h(V,Z),FW)=-JV(\ln \sigma)g(Z,W)-V(\ln \sigma)g(Z,TW)\,,\qquad\qquad\label{e17}
\end{align}
where $U,V\in\mathcal{D}^T$,\, $X\in\mathcal{D}^\bot$ and  $Z,W\in\mathcal{D}^\theta$.
\end{lemma}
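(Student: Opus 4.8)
The strategy mirrors the proof of Lemma \ref{lem1}: whereas that lemma recorded the components of $h$ in the subbundle $J(\mathcal{D}^\bot)$, here the plan is to read off the components of $h$ in the subbundle $F(\mathcal{D}^\theta)$ by writing each $FZ$ as $JZ-TZ$ via (\ref{e7}) and evaluating $g(\bar\nabla_\bullet\,\bullet,FZ)=g(\bar\nabla_\bullet\,\bullet,JZ)-g(\bar\nabla_\bullet\,\bullet,TZ)$. Throughout I would use that $J$ is skew-symmetric for $g$ (an immediate consequence of (\ref{e5})) and that $\bar\nabla J=0$ (the Kähler condition (\ref{e6})), so that $J$ may be moved across $\bar\nabla$ at the cost of a sign. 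The remaining inputs are the covariant-derivative formulas (\ref{a1})--(\ref{a3}) of Lemma \ref{lem001}, applied to the intrinsic Levi-Civita connection $\nabla$ of the biwarped product $M=M_T\times_f M_\bot\times_\sigma M_\theta$, together with the orthogonality of $\mathcal{D}^T,\mathcal{D}^\bot,\mathcal{D}^\theta$ and the facts $J\mathcal{D}^T=\mathcal{D}^T$ and $T\mathcal{D}^\theta\subseteq\mathcal{D}^\theta$.

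For (\ref{e15}) I would compute $g(h(U,V),FZ)=g(\bar\nabla_U V,JZ)-g(\bar\nabla_U V,TZ)$. In the first term, moving $J$ across gives $-g(\bar\nabla_U JV,Z)=-g(\nabla_U JV,Z)$ since $JV$ is tangent; as $U,JV\in\mathcal{D}^T$ lie in the (totally geodesic) base, (\ref{a1}) keeps $\nabla_U JV$ inside $\mathcal{D}^T$, which is orthogonal to $Z$. The second term is handled identically, $\nabla_U V\in\mathcal{D}^T\perp TZ$, so both vanish. For (\ref{e16}) the same splitting is applied to $\bar\nabla_X V$; now $JV\in\mathcal{D}^T$ is in the base and $X\in\mathcal{D}^\bot$ in a fibre, so (\ref{a2}) gives $\nabla_X JV=JV(\ln f)X$ and $\nabla_X V=V(\ln f)X$, and pairing these against $Z,TZ\in\mathcal{D}^\theta$ kills both terms by the orthogonality $\mathcal{D}^\bot\perp\mathcal{D}^\theta$.

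The substantive computation is (\ref{e17}), and the one delicate choice there is which vector field to differentiate. The plan is to invoke the symmetry $h(V,Z)=h(Z,V)$ and compute $g(\bar\nabla_Z V,FW)=g(\bar\nabla_Z V,JW)-g(\bar\nabla_Z V,TW)$ --- that is, to differentiate $V$ along $Z$, not $Z$ along $V$. The term $g(\bar\nabla_Z V,TW)$ is immediate: $TW$ is tangent, so it equals $g(\nabla_Z V,TW)$, and (\ref{a2}) gives $\nabla_Z V=V(\ln\sigma)Z$, producing $-V(\ln\sigma)g(Z,TW)$ after the sign from the splitting. For $g(\bar\nabla_Z V,JW)$, moving $J$ across yields $-g(\bar\nabla_Z JV,W)$; because $JV\in\mathcal{D}^T$ is tangent this equals $-g(\nabla_Z JV,W)$, and since $JV$ sits in the base while $Z$ sits in the $M_\theta$-fibre, (\ref{a2}) gives $\nabla_Z JV=JV(\ln\sigma)Z$, hence $-JV(\ln\sigma)g(Z,W)$. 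Summing the two pieces produces exactly the claimed identity.

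The main obstacle --- really the only place a naive attempt stalls --- is this choice of differentiation order in (\ref{e17}). If one instead expands $g(\bar\nabla_V Z,FW)$ and pushes $J$ through $\bar\nabla_V JZ=\bar\nabla_V(TZ+FZ)$, then the Weingarten formula (\ref{e3}) reintroduces a term $g(A_{FZ}V,W)=g(h(V,W),FZ)$ of the same shape as the quantity being computed, and the identity fails to close in a single pass. Differentiating $V$ (holomorphic, so that $JV$ stays tangent and remains in the base) rather than $Z$ avoids the Weingarten term entirely and lets every surviving piece be evaluated directly by the warped-product formula (\ref{a2}). In particular, no appeal to the characterization theorem or to Lemma \ref{lem01}/Lemma \ref{lem02} is required; the result is a direct consequence of the Kähler identities together with the connection formulas (\ref{a1})--(\ref{a3}).
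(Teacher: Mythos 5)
Your proposal is correct and follows essentially the same route as the paper: split $FZ=JZ-TZ$, move $J$ across $\bar\nabla$ using the K\"ahler condition, and evaluate the surviving tangential covariant derivatives with the biwarped-product formulas (\ref{a1})--(\ref{a3}); in particular your computation of (\ref{e17}) via $g(\bar\nabla_Z V,JW)-g(\bar\nabla_Z V,TW)=-g(\nabla_Z JV,W)-g(\nabla_Z V,TW)$ is exactly the paper's. The only (immaterial) variation is in (\ref{e15}), where the paper shifts the derivative onto $Z$ and $TZ$ and applies (\ref{a2}), while you keep it on $JV$ and $V$ and invoke (\ref{a1}) together with orthogonality of the distributions.
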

\begin{proof} Using (\ref{e2}),(\ref{e5}) and (\ref{e6}), we have
\begin{equation}\nonumber
\begin{array}{c}
g(h(U,V),FZ)=g(\bar{\nabla}_{U}V,FZ)=g(\bar{\nabla}_{U}V,JZ)-g(\bar{\nabla}_{U}V,TZ)
\end{array}
\end{equation}
for $U,V\in\mathcal{D}^T$ and $Z\in\mathcal{D}^\theta$. After some calculation, we obtain
\begin{equation}\nonumber
\begin{array}{c}
g(h(U,V),FZ)=g(\nabla_{U}Z,JV)+ g(\nabla_{U}TZ,V)
\end{array}.
\end{equation}
Here, we know $\nabla_{U}Z=U(\ln \sigma)Z$ and $\nabla_{U}Z=U(\ln \sigma)TZ$ from (\ref{a2}). Thus, we get
\begin{equation}\nonumber
\begin{array}{c}
g(h(U,V),FZ)=U(\ln \sigma)g(Z,JV)+U(\ln \sigma)g(TZ,V)=0
\end{array},
\end{equation}
since $g(Z,JV)=g(TZ,V)=0.$ So, (\ref{e15}) follows.
Now, we prove (\ref{e17}). Let $V\in\mathcal{D}^T$ and $Z,W\in\mathcal{D}^\theta$. Then using (\ref{e2}),(\ref{e5}) and (\ref{e6}), we have
\begin{equation}\nonumber
\begin{array}{c}
g(h(V,Z),FW)=-g(\nabla_{Z}JV,W)-g(\nabla_{Z}V,TW)
\end{array}.
\end{equation}
Again, by (\ref{a2}), we easily get (\ref{e17}).
Similarly, we can obtain (\ref{e16}).
\end{proof}
The last lemma shows partially us the behavior of the second fundamental form $h$ of the biwarped product generalized
$J$-induced submanifolds of order 1 of type $M_{T}\times_{f}M_{\bot}\times_{\sigma}M_{\theta}$ in the normal subbundle $F(\mathcal{D}^\theta).$
\begin{remark} Equation (\ref{e12}) of Lemma \ref{lem1} and equation (\ref{e15}) of Lemma \ref{lem2} also hold
for skew CR-warped product submanifolds of K\"{a}hler  manifolds, see Lemma 4 of \cite{S4}.
Moreover, equations (\ref{e15}) and (\ref{e17}) of Lemma \ref{lem2} are also valid for warped product pointwise semi-slant submanifolds
in K\"{a}hler  manifolds, see Lemma 5.3 of \cite{S5}.
\end{remark}
By (\ref{e12}) and (\ref{e15}), we immediately get the following result.
\begin{cor} Let $M$ be a biwarped product generalized $J$-induced submanifold of order 1
in the form $M_{T}\times_{f}M_{\bot}\times_{\sigma}M_{\theta}$ of a K\"{a}hler  manifold $(\bar{M},J,g)$
such that the invariant normal subbundle $\overline{\mathcal{D}}^T=\{0\}.$ Then  $M$ is $\mathcal{D}^T$-geodesic.
\end{cor}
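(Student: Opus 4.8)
The goal is to show that $h(U,V)=0$ for all $U,V\in\mathcal{D}^{T}$, which is exactly what it means for $M$ to be $\mathcal{D}^{T}$-geodesic. The natural strategy is to prove that the normal vector $h(U,V)$ is orthogonal to the entire normal bundle $T^{\bot}M$, and then invoke non-degeneracy of the metric on the normal bundle. The key structural input is the decomposition (\ref{g2}), namely $T^{\bot}M=J(\mathcal{D}^{\bot})\oplus F(\mathcal{D}^{\theta})\oplus\overline{\mathcal{D}}^{T}$. First I would record that, under the standing hypothesis $\overline{\mathcal{D}}^{T}=\{0\}$, this collapses to the orthogonal splitting $T^{\bot}M=J(\mathcal{D}^{\bot})\oplus F(\mathcal{D}^{\theta})$, so that every normal vector field $\xi$ can be written as $\xi=JX+FZ$ with $X\in\mathcal{D}^{\bot}$ and $Z\in\mathcal{D}^{\theta}$.

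With that reduction in hand, the computation is immediate. Fix $U,V\in\mathcal{D}^{T}$ and an arbitrary $\xi=JX+FZ\in T^{\bot}M$. Then by bilinearity of the metric, $g(h(U,V),\xi)=g(h(U,V),JX)+g(h(U,V),FZ)$. The first term vanishes by equation (\ref{e12}) of Lemma \ref{lem1}, and the second term vanishes by equation (\ref{e15}) of Lemma \ref{lem2}. Hence $g(h(U,V),\xi)=0$ for every $\xi\in T^{\bot}M$. Since the induced metric is non-degenerate on $T^{\bot}M$, this forces $h(U,V)=0$. As $U,V\in\mathcal{D}^{T}$ were arbitrary, $M$ is $\mathcal{D}^{T}$-geodesic, as claimed.

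I do not anticipate any genuine obstacle here: this is a direct corollary whose entire content is the observation that, once $\overline{\mathcal{D}}^{T}=\{0\}$, the two normal subbundles $J(\mathcal{D}^{\bot})$ and $F(\mathcal{D}^{\theta})$ already exhaust $T^{\bot}M$, so the two orthogonality relations (\ref{e12}) and (\ref{e15}) suffice to kill $h(U,V)$ completely. The only point that requires a moment's care is verifying that $J(\mathcal{D}^{\bot})$ and $F(\mathcal{D}^{\theta})$ together span the whole normal bundle, which is precisely where the hypothesis $\overline{\mathcal{D}}^{T}=\{0\}$ enters; without it, $h(U,V)$ could still have a nonzero component in the invariant piece $\overline{\mathcal{D}}^{T}$, and the conclusion would fail.
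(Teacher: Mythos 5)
Your proposal is correct and follows exactly the paper's route: the paper derives this corollary immediately from equations (\ref{e12}) and (\ref{e15}), together with the collapse of the normal bundle decomposition (\ref{g2}) to $J(\mathcal{D}^{\bot})\oplus F(\mathcal{D}^{\theta})$ when $\overline{\mathcal{D}}^{T}=\{0\}$. You have merely spelled out the details that the paper leaves implicit.
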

Lastly, we give a necessary and sufficient condition for such submanifolds to be locally trivial.
\begin{theorem} Let $M$ be a biwarped product proper generalized $J$-induced submanifold in the form $M_{T}\times_{f}M_{\bot}\times_{\sigma}M_{\theta}$
of a K\"{a}hler  manifold $(\bar{M},J,g)$ such that the invariant normal subbundle $\overline{\mathcal{D}}^T=\{0\}.$
Then  $M$ is locally trivial if and only if $M$ is  both $(\mathcal{D}^T,\mathcal{D}^\bot)$ and  $(\mathcal{D}^T,\mathcal{D}^\theta)$-mixed geodesic.
\end{theorem}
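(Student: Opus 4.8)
The plan is to use the hypothesis $\overline{\mathcal{D}}^T=\{0\}$ to reduce everything to the two component formulas already recorded in Lemma \ref{lem1} and Lemma \ref{lem2}. Indeed, by (\ref{g2}) the normal bundle splits as $T^\bot M = J(\mathcal{D}^\bot)\oplus F(\mathcal{D}^\theta)$, so a normal vector field vanishes precisely when its $g$-product with every $JY$ ($Y\in\mathcal{D}^\bot$) and with every $FW$ ($W\in\mathcal{D}^\theta$) is zero. Hence $M$ is $(\mathcal{D}^T,\mathcal{D}^\bot)$-mixed geodesic if and only if $g(h(V,X),JY)=0$ and $g(h(V,X),FZ)=0$ for all admissible vectors, and analogously for $(\mathcal{D}^T,\mathcal{D}^\theta)$. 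Two further facts will be used repeatedly: since $\mathcal{D}^T$ is holomorphic, $J$ maps $\mathcal{D}^T$ onto itself, so that $JV(\psi)=0$ for every $V\in\mathcal{D}^T$ is equivalent to $V(\psi)=0$ for every $V\in\mathcal{D}^T$; and, as established in the proof of the characterization theorem, $\nabla\ln f$ and $\nabla\ln\sigma$ both lie in $\mathcal{D}^T$, so that $V(\ln f)=0$ for all $V\in\mathcal{D}^T$ is equivalent to $f$ being locally constant, and likewise for $\sigma$.

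For the pair $(\mathcal{D}^T,\mathcal{D}^\bot)$, the $F(\mathcal{D}^\theta)$-component of $h(V,X)$ vanishes automatically by (\ref{e16}), while the $J(\mathcal{D}^\bot)$-component is controlled by (\ref{e13}), namely $g(h(V,X),JY)=-JV(\ln f)\,g(X,Y)$. Thus $M$ is $(\mathcal{D}^T,\mathcal{D}^\bot)$-mixed geodesic if and only if $JV(\ln f)\,g(X,Y)=0$ for all $V\in\mathcal{D}^T$ and $X,Y\in\mathcal{D}^\bot$. Choosing $Y=X\neq0$ (possible since $M$ is proper, so $\mathcal{D}^\bot\neq\{0\}$) forces $JV(\ln f)=0$, which by the two facts above is exactly $f$ locally constant; the converse direction follows at once from (\ref{e13}), (\ref{e16}), and $\overline{\mathcal{D}}^T=\{0\}$. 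This settles one half of the equivalence.

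The pair $(\mathcal{D}^T,\mathcal{D}^\theta)$ is the delicate step, since here the $J(\mathcal{D}^\bot)$-component of $h(V,Z)$ vanishes by (\ref{e14}), but the relevant formula (\ref{e17}) carries two terms, $g(h(V,Z),FW)=-JV(\ln\sigma)\,g(Z,W)-V(\ln\sigma)\,g(Z,TW)$. To separate the two warping-derivative factors I would exploit the algebra of $T$ on $\mathcal{D}^\theta$. First set $Z=W$: because $J$ is skew-symmetric with respect to $g$ by (\ref{e5}) and $FW$ is normal, one has $g(W,TW)=g(W,JW)=0$, so the requirement collapses to $JV(\ln\sigma)\,g(W,W)=0$, giving $JV(\ln\sigma)=0$. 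With this term eliminated, the residual condition $V(\ln\sigma)\,g(Z,TW)=0$ is tested by taking $Z=TW$, where (\ref{e9}) gives $g(TW,TW)=\cos^2\!\theta\,g(W,W)$; since $M$ is proper we have $\cos^2\!\theta>0$, forcing $V(\ln\sigma)=0$. Hence $M$ is $(\mathcal{D}^T,\mathcal{D}^\theta)$-mixed geodesic if and only if $V(\ln\sigma)=0$ for all $V\in\mathcal{D}^T$, that is, if and only if $\sigma$ is locally constant, the converse again being clear from (\ref{e14}) and (\ref{e17}).

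Combining the two halves, $M$ is simultaneously $(\mathcal{D}^T,\mathcal{D}^\bot)$- and $(\mathcal{D}^T,\mathcal{D}^\theta)$-mixed geodesic if and only if both $f$ and $\sigma$ are locally constant, which is exactly the condition that the biwarped product $M=M_T\times_f M_\bot\times_\sigma M_\theta$ be locally trivial. I expect the only genuine obstacle to be the separation of the two terms in (\ref{e17}); the skew-symmetry of $T$ and the nondegeneracy encoded in (\ref{e9}) are precisely what make this separation work, and properness of $M$ is essential both there (through $\cos^2\!\theta>0$) and in the first half (through $\mathcal{D}^\bot\neq\{0\}$).
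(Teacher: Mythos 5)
Your proposal is correct and follows the same overall strategy as the paper's proof: use the hypothesis $\overline{\mathcal{D}}^T=\{0\}$ together with the decomposition (\ref{g2}) so that $h(V,X)$ and $h(V,Z)$ are determined by their components against $J(\mathcal{D}^\bot)$ and $F(\mathcal{D}^\theta)$, and then read off the mixed-geodesic conditions from (\ref{e13}), (\ref{e16}) for the pair $(\mathcal{D}^T,\mathcal{D}^\bot)$ and from (\ref{e14}), (\ref{e17}) for the pair $(\mathcal{D}^T,\mathcal{D}^\theta)$. The only point of divergence is how you decouple the two terms of (\ref{e17}). The paper substitutes $V\mapsto JV$ and then $W\mapsto TW$ in the identity $JV(\ln\sigma)g(Z,W)+V(\ln\sigma)g(Z,TW)=0$ and eliminates between the resulting equations, arriving at $\sin^{2}\!\theta\,JV(\ln\sigma)g(Z,W)=0$ and invoking $\sin\theta\neq0$. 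You instead test the identity on the diagonal $Z=W$, where $g(W,TW)=0$ by the skew-symmetry of $T$, which isolates $JV(\ln\sigma)=0$ at once, and then on $Z=TW$, where $g(TW,TW)=\cos^{2}\!\theta\,g(W,W)$ by (\ref{e9}), to force $V(\ln\sigma)=0$; here properness enters through $\cos\theta\neq0$ rather than $\sin\theta\neq0$. Your variant is marginally more economical, and in fact your second test is redundant: once $JV(\ln\sigma)=0$ holds for all $V\in\mathcal{D}^T$, the fact that $J$ restricts to an automorphism of $\mathcal{D}^T$ (which you note) already gives $V(\ln\sigma)=0$ for all $V\in\mathcal{D}^T$, hence $\sigma$ locally constant since $\nabla(\ln\sigma)\in\mathcal{D}^T$. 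No gaps.
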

\begin{proof} Let $M$ be a biwarped product proper generalized $J$-induced submanifold of order 1 in the form $M_{T}\times_{f}M_{\bot}\times_{\sigma}M_{\theta}$
of a K\"{a}hler  manifold $(\bar{M},J,g)$ such that the invariant normal subbundle $\overline{\mathcal{D}}^T=\{0\}.$
If $M$ is locally trivial, then the warping functions $f$ and $\sigma$ are constants.
By (\ref{e13}), we have $g(h(V,X),JY)=0$ for $V\in\mathcal{D}^T$ and $X,Y\in\mathcal{D}^\bot$, since $JV(\ln f)=0.$
Taking into account the equation (\ref{g2}) and (\ref{e16}), we get $h(V,X)=0$.
It means that $M$ is $(\mathcal{D}^T,\mathcal{D}^\bot)$-mixed geodesic.\\
On the other hand, for any $V\in\mathcal{D}^T$ and  $Z,W\in\mathcal{D}^\theta,$ we have $g(h(V,Z),FW)=0$ from (\ref{e17}),
since $JV(\ln \sigma)=0$ and $V(\ln \sigma)=0.$ Taking into account the equation (\ref{g2}) and (\ref{e14}), we obtain
$h(V,Z)=0$. Which says us $M$ is $(\mathcal{D}^T,\mathcal{D}^\theta)$-mixed geodesic.\\

Conversely, let $M$ be both $(\mathcal{D}^T,\mathcal{D}^\bot)$ and $(\mathcal{D}^T,\mathcal{D}^\theta)$-mixed geodesic.
Then, for any $V\in\mathcal{D}^T$, from (\ref{e13}) we conclude that $JV(\ln f)=0$, since $M$ is
$(\mathcal{D}^T,\mathcal{D}^\bot)$-mixed geodesic. Hence, it follows that $f$ is a constant. Since
$M$ is also $(\mathcal{D}^T,\mathcal{D}^\theta)$-mixed geodesic, for $V\in\mathcal{D}^T$ and  $Z,W\in\mathcal{D}^\theta,$ we have
\begin{equation}
\label{e18}
\begin{array}{c}
JV(\ln \sigma)g(Z,W)+V(\ln \sigma)g(Z,TW)=0
\end{array}
\end{equation}
from (\ref{e17}). If we put $V=JV$ in (\ref{e18}), we obtain
\begin{equation}
\label{} \nonumber
\begin{array}{c}
-V(\ln \sigma)g(Z,W)+JV(\ln \sigma)g(Z,TW)=0
\end{array}.
\end{equation}
If we take $W=TW$ in the last equation and use (\ref{e8}), the last equation becomes
\begin{equation}
\label{e19}
\begin{array}{c}
-V(\ln \sigma)g(Z,TW)-\cos^{2}\!\theta JV(\ln \sigma)g(Z,W)=0
\end{array}.
\end{equation}
From (\ref{e18}) and (\ref{e19}), we get
\begin{equation}
\label{e20}
\begin{array}{c}
\sin^{2}\!\theta JV(\ln \sigma)g(Z,W)=0
\end{array}.
\end{equation}
Since $M$ is proper, $\sin\!\theta\neq0.$ So, we deduce that $JV(\ln \sigma)=0$  from (\ref{e20}).
Hence, it follows that $\sigma$ is a constant. Thus, $M$ must be locally trivial, since we found the warping functions
$f$ and $\sigma$ as constants.
\end{proof}
\section{An inequality for non-trivial biwarped product generalized\\
$J$-induced submanifolds of order 1 in the form $M_{T}\times_{f}M_{\bot}\times_{\sigma}M_{\theta}$}
In this section, by using the results given the preceding section, we shall establish an inequality for the squared
norm of the second fundamental form in terms of the warping functions for biwarped product generalized $J$-induced submanifolds of order 1
in the form $M_{T}\times_{f}M_{\bot}\times_{\sigma}M_{\theta}$,  
where $M_{T}$ is a holomorphic, $M_{\bot}$ is a totally real and $M_{\theta}$ is
a pointwise slant submanifold of a K\"{a}hler  manifold$(\bar{M},J,g)$.\\

Let $M$ be a $(k+n+m)$-dimensional biwarped product generalized $J$-induced submanifold of order 1 of type
$M_{T}\times_{f}M_{\bot}\times_{\sigma}M_{\theta}$ of a K\"{a}hler  manifold $\bar{M}$ and let
$\{e_{1},...,e_{k},\tilde{e}_{1},...,\tilde{e}_{n}, \bar{e}_{1},...,\bar{e}_{m}, e_{1}^{*},...,e_{k}^{*}, J\tilde{e}_{1},...,J\tilde{e}_{n},\hat{e_{1}},...,\hat{e_{l}}\}$
be a canonical orthonormal basis of $\bar{M}$ such that $\{e_{1},...,e_{k}\}$
is an orthonormal basis of $\mathcal{D}^T$, $\{\tilde{e}_{1},...,\tilde{e}_{n}\}$ is an orthonormal basis of
$\mathcal{D}^\bot$, $\{\bar{e}_{1},...,\bar{e}_{k}\}$ is an orthonormal basis of $\mathcal{D}^\theta$, $\{J\tilde{e}_{1},...,J\tilde{e}_{n}\}$ is
an orthonormal basis of $J\mathcal{D}^\bot$, $\{e_{1}^{*},...,e_{m}^{*}\}$ is an orthonormal basis of
$F\mathcal{D}^\theta$ and $\{\hat{e_{1}},...,\hat{e_{l}}\}$ is an orthonormal basis of $\overline{\mathcal{D}}^T.$
Here, $k=dim(\mathcal{D}^T)$, $n=dim(\mathcal{D}^\bot)$, $m=dim(\mathcal{D}^\theta)$ and $l=dim(\overline{\mathcal{D}}^T).$
\begin{remark} \label{rem2} In view of (\ref{e5}), we can observe that $\{Je_{1},...,Je_{k}\}$ is
also an orthonormal basis of $\mathcal{D}^T$. On the other hand,
with the help of (\ref{e9}) and (\ref{e10}), we can
see that $\{\sec\!\theta T\bar{e}_{1},...,\sec\!\theta
T\bar{e}_{m}\}$ is also an orthonormal basis of $\mathcal{D}^\theta$
and $\{\csc\!\theta F\bar{e}_{1},...,\csc\!\theta F\bar{e}_{m}\}$ is
also an orthonormal basis of $F\mathcal{D}^\theta$, where $\theta$ is the slant function of $\mathcal{D}^\theta$.
\end{remark}
\begin{theorem} \label{thm1}
Let $M$ be a biwarped product proper generalized $J$-induced submanifold of order 1 in the form $M_{T}\times_{f}M_{\bot}\times_{\sigma}M_{\theta}$
of a K\"{a}hler manifold $(\bar{M},J,g)$. Then the squared norm of the second fundamental form $h$ of $M$ satisfies
\begin{align} \label{e21}
\|h\|^{2}\geq2\{n\|\nabla(\ln f)\|^{2}+m(\csc^{2}\!\theta+\cot^{2}\!\theta)\|\nabla(\ln \sigma)\|^{2}\}
\end{align}
where $n=dim(M_{\bot})$ and $m=dim(M_{\theta}).$ The equality case of (\ref{e21}) holds identically if and only if the following assertions are true.\\
\textbf{a)} $M_{T}$ is a totally geodesic submanifold in $\bar{M}.$\\
\textbf{b)} $M_{\bot}$ and $M_{\theta}$ are totally umbilic submanifolds in $\bar{M}$
with their mean curvature vector fields $-\nabla(\ln f)$ and $-\nabla(\ln \sigma)$, respectively.\\
\textbf{c)} $M$ is minimal in $\bar{M}.$\\
\textbf{d)} $M$ is $(\mathcal{D}^\bot,\mathcal{D}^\theta)$-mixed geodesic.
\end{theorem}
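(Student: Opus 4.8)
The plan is to compute $\|h\|^2$ directly in the canonical orthonormal basis described in the paragraph before the theorem, and to throw away every term that cannot be controlled, keeping only the components that the Lemmas of the preceding section evaluate explicitly. Writing
$$
\|h\|^2=\sum_{A,B}g(h(E_A,E_B),h(E_A,E_B)),
$$
where $\{E_A\}$ runs over the tangent frame $\{e_i,\tilde e_a,\bar e_r\}$ of $M$, I would expand this as a sum over the normal frame $\{J\tilde e_a,\,e^*_s,\,\hat e_t\}$ spanning $J(\mathcal D^\bot)\oplus F(\mathcal D^\theta)\oplus\overline{\mathcal D}^T$. The strategy is to discard the $\overline{\mathcal D}^T$-components entirely (they are $\geq0$, so dropping them only weakens the inequality) and to retain only the mixed terms $h(\mathcal D^T,\mathcal D^\bot)$ projected onto $J(\mathcal D^\bot)$ and $h(\mathcal D^T,\mathcal D^\theta)$ projected onto $F(\mathcal D^\theta)$.

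\textbf{Extracting the two main contributions.}
For the $J(\mathcal D^\bot)$ part, I would use (\ref{e13}), namely $g(h(V,X),JY)=-JV(\ln f)g(X,Y)$. Taking $V=e_i$, $X=\tilde e_a$, $Y=\tilde e_b$ and summing, the off-diagonal terms vanish by orthonormality while the diagonal gives $\sum_{i,a}(Je_i(\ln f))^2$; since $\{Je_i\}$ is again an orthonormal basis of $\mathcal D^T$ (Remark~\ref{rem2}) and $\nabla(\ln f)\in\mathcal D^T$, this sum equals $n\|\nabla(\ln f)\|^2$. Each such mixed pair $h(e_i,\tilde e_a)$ is counted twice in $\|h\|^2$, producing the factor $2n\|\nabla(\ln f)\|^2$. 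For the $F(\mathcal D^\theta)$ part the analogous but more delicate computation uses (\ref{e17}): I would evaluate $\sum g(h(e_i,\bar e_r),F\bar e_s)^2$ together with the alternative basis $\{\sec\theta\,T\bar e_r\}$ of $\mathcal D^\theta$ from Remark~\ref{rem2}, which converts the two terms $JV(\ln\sigma)g(Z,W)+V(\ln\sigma)g(Z,TW)$ into a clean expression. After summing over $r,s$ and using (\ref{e9}), the holomorphic sum over $\{Je_i\}$ again collapses $\|\nabla(\ln\sigma)\|^2$, and the combination of the $F\bar e_s$- and $F T\bar e_r$-contributions is precisely what yields the coefficient $\csc^2\theta+\cot^2\theta$. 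Doubling for the mixed symmetry gives $2m(\csc^2\theta+\cot^2\theta)\|\nabla(\ln\sigma)\|^2$.

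\textbf{The expected obstacle.}
The hard part will be the $F(\mathcal D^\theta)$ bookkeeping: because $F$ is not an isometry and $T$ is nontrivial on $\mathcal D^\theta$, one must handle $g(Z,W)$ and $g(Z,TW)$ simultaneously and correctly normalize using both (\ref{e9})--(\ref{e10}) and the two alternative bases of Remark~\ref{rem2}. The trigonometric factor $\csc^2\theta+\cot^2\theta$ emerges only after carefully combining the contribution of $F\bar e_s$ with that of $FT\bar e_r$ (rescaled by $\sec\theta$), and it is easy to lose a factor of $\cos^2\theta$ or miscount here. Every term discarded is manifestly a sum of squares, so the inequality direction is automatic.

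\textbf{The equality case.}
Finally, equality forces every discarded term to vanish. The vanishing of the $\overline{\mathcal D}^T$-components of $h$ restricted to $\mathcal D^T$, together with (\ref{e12}) and (\ref{e15}), gives $h(\mathcal D^T,\mathcal D^T)=0$, and combined with $M_T$ being totally geodesic in $M$ this yields assertion \textbf{(a)}. The umbilicity computations already carried out in the characterization theorem (namely $h^\bot(X,Y)=-g(X,Y)\nabla(\ln f)$ and $h^\theta(Z,W)=-g(Z,W)\nabla(\ln\sigma)$) identify the fibers' second fundamental forms inside $M$; pushing these into $\bar M$ via the Gauss formula for $\bar M\supset M$ gives the umbilicity and mean curvature vectors of \textbf{(b)}. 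Requiring the remaining suppressed terms---the purely tangential traces and the vanishing of $h(\mathcal D^\bot,\mathcal D^\theta)$---to be zero yields \textbf{(d)}, and assembling the traces shows $H=0$, i.e. \textbf{(c)}. I would verify that (a)--(d) are jointly equivalent to the simultaneous vanishing of exactly those components of $h$ that were dropped in the estimate, closing the equivalence.
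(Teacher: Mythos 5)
Your proposal is correct and follows essentially the same route as the paper: the same orthonormal-frame expansion of $\|h\|^{2}$, discarding the nonnegative terms other than the two mixed components evaluated by (\ref{e13}) and (\ref{e17}), with Remark~\ref{rem2} and (\ref{e9}) producing the factor $\csc^{2}\!\theta+\cot^{2}\!\theta$ (the cross term vanishing since $g(\nabla(\ln\sigma),J\nabla(\ln\sigma))=0$), and the same reading of the equality case from the vanishing of the dropped components.
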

\begin{proof} By the decomposition (\ref{g1}), the squared norm of the
second fundamental form $h$ can be written as
\begin{align*}
 \|h\|^{2}=&\,\|h(\mathcal{D}^T,\mathcal{D}^T)\|^{2}+\|h(\mathcal{D}^\bot,\mathcal{D}^\bot)\|^{2}
+\|h(\mathcal{D}^\theta,\mathcal{D}^{\theta})\|^{2}\\
&+2\bigg\{\|h(\mathcal{D}^T,\mathcal{D}^\bot)\|^{2}+\|h(\mathcal{D}^T,\mathcal{D}^{\theta})\|^{2}+\|h(\mathcal{D}^\bot,\mathcal{D}^{\theta})\|^{2}\bigg\}.
\end{align*}
In view of decomposition (\ref{g2}) and by (\ref{e12})$\sim$(\ref{e17}), which can be explicitly written as follows:
\begin{align}
\|h\|^{2}=&\displaystyle\sum^{n}_{a,b,c=1}\!\!\!g(h(\tilde{e}_{a},\tilde{e}_{b}),J\tilde{e}_{c})^{2}+
\displaystyle\sum^{n}_{a,b=1}\displaystyle\sum^{m}_{r=1}g(h(\tilde{e}_{a},\tilde{e}_{b}),e_{r}^{*})^{2}\nonumber\\
&+\displaystyle\sum^{m}_{r,s=1}\displaystyle\sum^{n}_{a=1}g(h(\bar{e}_{r},\bar{e}_{s}),J\tilde{e}_{a})^{2}+
\displaystyle\sum^{m}_{r,s,q=1}g(h(\bar{e}_{r},\bar{e}_{s}),e_{q}^{*})^{2}\nonumber\\
&+2\displaystyle\sum^{k}_{i=1}\displaystyle\sum^{n}_{a,b=1}g(h(e_{i},\tilde{e}_{a}),J\tilde{e}_{b})^{2}
+2\displaystyle\sum^{k}_{i=1}\displaystyle\sum^{m}_{r,s=1}g(h(e_{i},\bar{e}_{r}),e_{s}^{*})^{2}\label{e23}\\
&+2\!\displaystyle\sum^{n}_{a,b,c=1}g(h(\tilde{e}_{a},\tilde{e}_{b}),J\tilde{e}_{c})^{2}+2\!\displaystyle\sum^{n}_{a,b=1}
\displaystyle\sum^{m}_{r=1}g(h(\tilde{e}_{a},\tilde{e}_{b}),e_{r}^{*})^{2}\nonumber\\
&+\displaystyle\sum^{k+n+m}_{A,B=1}\displaystyle\sum^{l}_{t=1}g(h(e_{A},e_{B}),\hat{e}_{t})^{2}.\nonumber
\end{align}
Where the set $\{e_{A}\}_{1\leq A\leq (k+n+m)}$ is an orthonormal basis of $M.$ Hence, we get
\begin{equation}\label{e23*}
\begin{array}{c}
\|h\|^{2}\geq2\bigg\{\displaystyle\sum^{k}_{i=1}\displaystyle\sum^{n}_{a,b=1}g(h(e_{i},\tilde{e}_{a}),J\tilde{e}_{b})^{2}
+\displaystyle\sum^{k}_{i=1}\displaystyle\sum^{m}_{r,s=1}g(h(e_{i},\bar{e}_{r}),e_{s}^{*})^{2}\bigg\}
\end{array}.
\end{equation}
Using (\ref{e13}) and Remark (\ref{rem2}), we arrive
\begin{equation} \nonumber
\begin{array}{c}
\|h\|^{2}\geq2\bigg\{\displaystyle\sum^{k}_{i=1}\displaystyle\sum^{n}_{a,b=1}(Je_{i}(\ln f)g(\tilde{e}_{a},\tilde{e}_{b}))^{2}
+\displaystyle\sum^{k}_{i=1}\displaystyle\sum^{m}_{r,s=1}g(h(e_{i},\bar{e}_{r}),\csc\!\theta F\bar{e}_{s}))^{2}\bigg\}
\end{array}.
\end{equation}
from the inequality (\ref{e23*}). Using (\ref{e17}) and after some calculation we find
\begin{equation} \label{e23**}
\begin{array}{c}
\|h\|^{2}\geq2n\|\nabla(\ln\!f)\|^{2}+2\displaystyle\sum^{k}_{i=1}\displaystyle\sum^{m}_{r,s=1}(\csc^{2}\!\theta)
\bigg\{(Je_{i}(\ln \sigma)g(\bar{e}_{r},\bar{e}_{s}))^{2}\\
\qquad\qquad\qquad+2Je_{i}(\ln \sigma)g(\bar{e}_{r},\bar{e}_{s})e_{i}(\ln \sigma)g(\bar{e}_{r},T\bar{e}_{s})+(e_{i}(\ln \sigma)g(\bar{e}_{r},T\bar{e}_{s}))^{2}\bigg\}.
\end{array}
\end{equation}
from the last inequality. Here,
\begin{align*}
&\displaystyle\sum^{k}_{i=1}\displaystyle\sum^{m}_{r,s=1}
\bigg\{(Je_{i}(\ln \sigma)g(\bar{e}_{r},\bar{e}_{s}))^{2}Je_{i}(\ln \sigma)g(\bar{e}_{r},\bar{e}_{s})e_{i}(\ln \sigma)g(\bar{e}_{r},T\bar{e}_{s})\\
&=\displaystyle\sum^{k}_{i=1}\displaystyle\sum^{m}_{r,s=1}g(\nabla\ln \sigma,Je_{i})g(\nabla\ln \sigma,e_{i})g(\bar{e}_{r},\bar{e}_{s})g(\bar{e}_{r},T\bar{e}_{s})\\
&=-\displaystyle\sum^{m}_{r,s=1}\bigg\{\displaystyle\sum^{k}_{i=1}g(g(\nabla(\ln \sigma),e_{i})e_{i},J\nabla(\ln \sigma))\bigg\}g(\bar{e}_{r},\bar{e}_{s})g(\bar{e}_{r},T\bar{e}_{s})\\
&=-g(\nabla(\ln \sigma),J\nabla(\ln \sigma))\displaystyle\sum^{m}_{r,s=1}g(\bar{e}_{r},\bar{e}_{s})g(\bar{e}_{r},T\bar{e}_{s})=0.
\end{align*}
Thus, by Remark (\ref{rem2}), the equation (\ref{e9}) and the last yield, we deduce the inequality (\ref{e21}) from the inequality (\ref{e23**}).\\

Next, from (\ref{e23}) we see that the equality case of (\ref{e21}) holds identically if and only if the following conditions hold.
\begin{equation}
\label{e24}
\begin{array}{c}
h(\mathcal{D}^T,\mathcal{D}^T)=\{0\}, \qquad
h(\mathcal{D}^\bot,\mathcal{D}^\bot)=\{0\}, \qquad
h(\mathcal{D}^\theta,\mathcal{D}^\theta)=\{0\}
\end{array}
\end{equation}
and
\begin{equation}
\label{e25}
\begin{array}{c}
h(\mathcal{D}^\bot,\mathcal{D}^\theta)=\{0\}.
\end{array}
\end{equation}
Since $M_{T}$ is totally geodesic in $M$, from the first condition in (\ref{e24}) it
follows that $M_{T}$ is also totally geodesic in $\bar{M}.$ So, assertion \textbf{a)} follows.
Now, let $h^{\bot}$ denote the second fundamental of $M_{\bot}$ in $M$. We know that $h^{\bot}(\mathcal{D}^\bot,\mathcal{D}^\bot)\subseteq\mathcal{D}^T$
from \cite{No}. Then for  $V\in\mathcal{D}^T$ and $X,Y\in\mathcal{D}^\bot,$ we have $g(h^{\bot}(X,Y),V)=g(\nabla_{X}Y,V).$
Here, we know $\nabla_{X}Y=^{\bot}\!\nabla_{X}Y-g(X,Y)\nabla(\ln f)$ from (\ref{a3}),
where $^{\bot}\nabla$ is an induced connection on $M_{\bot}.$ Hence, we obtain
\begin{equation} \nonumber
\begin{array}{c}
g(h^{\bot}(X,Y),V)=-V(\ln f)g(X,Y)=-g(g(X,Y)\nabla(\ln f),V)
\end{array}.
\end{equation}
It follows that
\begin{equation} \label{e26}
\begin{array}{c}
h^{\bot}(X,Y)=-g(X,Y)\nabla(\ln f)
\end{array}
\end{equation}
from the last equation. Thus, combining the second condition in (\ref{e24}) and (\ref{e26}), we can deduce that
$M_{\bot}$ is a totally umbilic submanifold in $\bar{M}$
with its mean curvature vector field $-\nabla(\ln f).$ By a similar argument, we can find
$M_{\theta}$ as a  totally umbilic submanifold in $\bar{M}$ with its mean curvature vector field $-\nabla(\ln \sigma).$
So, assertion \textbf{b)} is obtained. Assertions \textbf{c)} and \textbf{d)}
immediately follow from (\ref{e24}) and (\ref{e25}), respectively.
\end{proof}
\begin{remark}  In case $\mathcal{D}^\theta=\{0\}$, Theorem \ref{thm1} coincides with Theorem 5.1 of \cite{Che2}.
In other words, Theorem \ref{thm1} is a generalization of Theorem 4.2 of \cite{Che2}.
Moreover,  Theorem \ref{thm1} coincides with Theorem 5.2 of \cite{S5} if $\mathcal{D}^\bot=\{0\}.$
Thus, Theorem \ref{thm1} is also a generalization of Theorem 5.2 of \cite{S5}.
\end{remark}

\bibliographystyle{amsplain}

\end{document}